\tikzstyle{arrow} = [thick,->,>=stealth]
\tikzstyle{process} = [rectangle, minimum width=4cm, minimum height=2cm, text centered, text width=2.5cm, draw=green, fill=green!10]
\newcommand{\Hom}{\text{Hom}}
\newcommand{\ran}{\text{Im }}
\newcommand{\IFF}{\text{if and only if}}
\newcommand{\R}{\mathbb{R} }
\newcommand{\C}{\mathbb{C} }
\theoremstyle{plain}
\newtheorem{theorem}{Theorem}[section]
\newtheorem{corollary}[theorem]{Corollary}
\newtheorem{proposition}[theorem]{Proposition}
\theoremstyle{definition}
\newtheorem{definition}[theorem]{Definition}
\newtheorem{remark}[theorem]{Remark}
\newtheorem{example}[theorem]{Example}
\begin{document}
\title{A New Perspective on Eigenvalues and Eigenvectors of Bicomplex Linear Operators}
\author{Anjali}
\email{anjalisharma773@gmail.com}
\address{Department of Applied Mathematics, Gautam Buddha University, Greater Noida, Uttar Pradesh 201312, India}

\author{Akhil Prakash}
\email{akhil.sharma140@gmail.com}
\address{Department of Mathematics, Aligarh Muslim University, Aligarh, Uttar Pradesh 202002, India}

\author{Amita}
\email{amitasharma234@gmail.com}
\address{Department of Mathematics, Indira Gandhi National Tribal University, Amarkantak, Madhya Pradesh 484886, India}

\author{Prabhat Kumar}
\email{prabhatphilosopher@gmail.com}
\address{Department of Applied Mathematics, Gautam Buddha University, Greater Noida, Uttar Pradesh 201312, India}

\keywords{Bicomplex Numbers, , Vector Space, Linear transformation, eigenvalue and eigenvector.}
\subjclass[IMS]{Primary 15A04, 15A30; Secondary 30G35}
\date{\today}
	
\begin{abstract}
This paper deals with eigenvalues and eigenvectors of bicomplex linear operators defined on bicomplex space. We investigate the properties of these operators in the context of eigenvalues and eigenvectors, along with some relevant theorems. Several theorems are explored to establish conditions for bicomplex eigenvalues and eigenvectors. Additionally, we examine the structure of the eigenspaces corresponding to these eigenvalues,analyze their properties, and discuss relevant theorems.
\end{abstract}
\maketitle
\section*{Introduction}
The field of bicomplex numbers has been a focus point of ongoing research in mathematics, experiencing significant advancements in recent years. Numerous researchers refer to \cite{alpay2023interpolation, alpay2014basics, wagh2024rank,Amita2018,luna2024singularities, luna2015, rochon2004}) have made substantial contributions to this domain. They have explored various avenues to analyze the properties of bicomplex numbers and develop concepts that adhere to a unified approach in multivariate complex number theory. Throughout this process, key ideas such as bicomplex topology  \cite{srivastava2008}, differentiability and analyticity of bicomplex functions \cite{price2018}, power series, bicomplex matrices, bicomplex Riemann zeta function and Dirichlet series, Cauchy's theory, and more, have been established with essential refinements. Furthermore, we have drawn upon existing literature, particularly the foundational concepts presented in the book  \cite{price2018} on bicomplex functions, incorporating relevant results and symbols.

Section 1 introduces the concept of bicomplex numbers and discusses fundamental definitions. Furthermore, we have discussed the bicomplex matrices and linear transformation. Also, we discussed the idempotent product in this section. 

Section 2 delves into eigenvalues and eigenvectors on bicomplex spaces. Here, we focus on the eigenvalues and eigenvectors of a particular type of  \;$\C_1$-linear maps  $T=e_1T_1 + e_2T_2$, and we investigate the related results and properties of $T$.

Section 3 discusses eigenspaces and modified eigenspaces of bicomplex  $\C_1$-linear maps $T=e_1T_1 +e_2T_2$. It also demonstrates the relationship between the set of eigenvalues and the set of modified eigenvalues of such type of bicomplex $\C_1 $-linear maps.

\section{Preliminaries and Notations} 
This section introduces bicomplex numbers, covering key concepts such as idempotent representation and the cartesian product of bicomplex space. It provides a comprehensive overview of these fundamental notions and presents essential results related to bicomplex numbers.\\

\noindent {\bf Bicomplex numbers:}
A bicomplex number is an element that can be expressed in the following form:
\[
\xi = {u}_{1}  + {i}_{1} {u}_{2}+ {i}_{2} {u}_{3} + {i}_{1} {i}_{2} {u}_{4},\; \;{u}_{k}\in \R, \;\;1\leqslant k\leqslant 4, \;\; \mbox{with}\;\; \;{i}_{1} {i}_{2} \ = \ {i}_{2} {i}_{1},\; \; {i}_{1}^2 = {i}_{2}^2  =  -1, 
\]
where $u_k$ are real numbers with $1\leqslant k\leqslant 4$, and the unit vectors $i_1$ and $i_2$ satisfy the conditions $i_1i_2 = i_2i_1$ and $i_1^2 = i_2^2 = -1$. The set of all bicomplex numbers is denoted as $\mathbb{C}_2$, referred to as the bicomplex space. The symbols $\mathbb{C}_{1},\; \mathbb{C}_{0}$, for convenience, denote the set of all complex numbers and the set of all real numbers respectively. The bicomplex space $\C_2$ can be described in the following two ways:\\
By considering the real components form:
\begin{eqnarray*}
\mathbb{C}_{2} &\coloneqq & \{{u}_{1}  + {i}_{1} {u}_{2}  + {i}_{2} {u}_{3}  + {i}_{1} {i}_{2} {u}_{4}\;:\; {u}_{1} , {u}_{2} , {u}_{3} , {u}_{4} \in\mathbb{C}_{0}\} 
\end{eqnarray*}
and the complex components form:
\begin{eqnarray}
\mathbb{C}_{2} &\coloneqq & \{{z}_{1} + {i}_{2} {z}_{2}\;:\;  {z}_{1}, {z}_{2} \in \mathbb{C}_{1}\}.
\end{eqnarray}
It is important to note that $\mathbb{C}_2$ contains zero-divisors, which means it is not a field but rather an algebra over $\mathbb{C}_1$. In $\mathbb{C}_2$, there are exactly four idempotent elements; $0, 1, e_1,$ and $e_2$. The nontrivial idempotent elements $e_1$ and $e_2$ are defined as follows:
 \[ e_{1} \coloneqq \frac{(1 + i_{1} i_{2})}{2}\quad  \mbox{and} \quad e_{2} \coloneqq \frac{(1 - i_{1} i_{2})}{2}.\]
Notice that $e_{1} + e_{2} =1$ , \;$e_{1}e_{2}  = e_{2}e_{1} =0$ and $e_{1}^2 =  e_{1},\; e_{2}^2 = e_{2}$. 
\begin{definition} \cite{price2018} (Principal Ideal) 
   The proper principal ideals generated by $e_1$ and $e_2$ are denoted as $I_1$ and $I_2$ and defined as:  
\begin{eqnarray*}
I_1=:\{\xi e_1:\xi \in \C_2\}=\{\xi^- e_1:\xi^- \in \C_1\},\\
I_2=:\{\xi e_2:\xi \in \C_2\}=\{\xi^+ e_2:\xi^+ \in \C_1\}. 
\end{eqnarray*}
\end{definition}
\noindent{\bf Singular and non-singular elements:} 
An element $\xi \in \C_2$ is said to be a non-singular if there exists $\eta \in \C_2$ such that $\xi. \eta = 1$. In other words, $\xi$ possesses a multiplicative inverse within $\C_2$. Also, an element in ${\C}_2$ that lacks such a multiplicative inverse is called a singular element. Moreover, there is an infinite number of elements that lack a multiplicative inverse.\\
Notably, $\xi=z_1 +i_2 z_2$ is classified as singular if and only if the absolute value of $z_1 +i_2 z_2$  equals zero, written as $|z_1^2 + z_2^2| = 0$. The set of all singular elements in $\C_2$ is denoted as $O_2 = I_1 \cup I_2$. \\

The following result mentioned in \cite{price2018} can be verified easily.

\begin{theorem}\label{price.1} 
Let $\xi\in \C_2$ be a bicomplex number. Then we have
\begin{itemize}
\item[] $\xi$ is singular $\IFF \quad \xi \in I_1 \cup I_2$
\item[]  in other words,\; $\xi$ is non-singular $\IFF \quad \xi \notin I_1 \cup I_2$.
\end{itemize}
\end{theorem}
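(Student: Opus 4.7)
The plan is to work with the idempotent representation of $\xi$, which converts multiplicative questions in $\C_2$ into two independent questions in the field $\C_1$. Writing $\xi = z_1 + i_2 z_2$ in the basis $\{e_1, e_2\}$, a direct computation (using $e_1 + e_2 = 1$ and $e_2 - e_1 = -i_1 i_2$) gives $\xi = \alpha\, e_1 + \beta\, e_2$ with $\alpha = z_1 - i_1 z_2$ and $\beta = z_1 + i_1 z_2$. Because $e_1, e_2$ are orthogonal idempotents summing to $1$, multiplication becomes componentwise: if $\eta = \gamma\, e_1 + \delta\, e_2$, then $\xi \eta = \alpha\gamma\, e_1 + \beta\delta\, e_2$.

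Next I would identify membership in the ideals purely through this decomposition. By definition $I_1 = \{\xi^- e_1 : \xi^- \in \C_1\}$, and since the $(e_1,e_2)$-representation is unique, $\xi \in I_1$ is equivalent to $\beta = 0$, and likewise $\xi \in I_2$ is equivalent to $\alpha = 0$. Hence $\xi \in I_1 \cup I_2$ if and only if $\alpha \beta = 0$ in $\C_1$. A parenthetical comment worth making is that $\alpha \beta = (z_1 - i_1 z_2)(z_1 + i_1 z_2) = z_1^2 + z_2^2$, which reconciles this characterization with the one stated before the theorem.

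For the main equivalence I would argue both directions. If $\alpha, \beta$ are both nonzero in $\C_1$, then $\eta := \alpha^{-1} e_1 + \beta^{-1} e_2$ satisfies $\xi \eta = e_1 + e_2 = 1$, so $\xi$ is non-singular; this handles the non-singular direction. Conversely, if (say) $\beta = 0$, so $\xi = \alpha e_1$, then for any $\eta = \gamma e_1 + \delta e_2 \in \C_2$ one has $\xi \eta = \alpha \gamma\, e_1$, which lies in $I_1$ and therefore cannot equal $1 = e_1 + e_2$; the case $\alpha = 0$ is symmetric, so $\xi$ is singular. The only mildly subtle point is the uniqueness of the $(e_1, e_2)$-decomposition, which is what justifies reading off membership in $I_1$ or $I_2$ from the vanishing of a single coefficient; everything else is a short calculation using the orthogonality of $e_1$ and $e_2$.
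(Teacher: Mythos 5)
Your proof is correct and complete: the reduction to the idempotent components, the componentwise multiplication, the identification of $I_1$ and $I_2$ with the vanishing of one coefficient (via uniqueness of the decomposition), and the two directions of the invertibility argument are all sound, and the remark that $\alpha\beta = z_1^2 + z_2^2$ correctly reconciles your criterion with the $|z_1^2+z_2^2|=0$ characterization stated just before the theorem. The paper itself gives no proof of this statement — it merely says the result from \cite{price2018} ``can be verified easily'' — so there is no in-paper argument to compare against; your argument is the standard one and fills that gap cleanly.
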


\noindent{\bf Idempotent representation of bicomplex numbers:} \cite{price2018}
Every bicomplex number $\xi = {z}_{1}+ {i}_{2} {z}_{2}$ can  be uniquely represented as the complex combination of elements $e_{1}$ and $e_{2}$  in the following form:
\[
\xi =  (z_{1} -  i_{1} z_{2}) e_{1} + (z_{1}  +  i_{1} z_{2}) e_{2},
\]
where complex numbers $(z_{1} -  i_{1} z_{2})$ and $(z_{1} +  i_{1} z_{2})$ are called idempotent components of $\xi$ and will be denoted by $\xi^-$ and $\xi^+$ respectively. Therefore the number $\xi = {z}_{1}+ {i}_{2} {z}_{2}$ can also be written as $\xi = \xi^- e_{1} + \xi^+ e_{2}$, where $\xi^-  = z_{1} - i_{1} z_{2}$ and  $\xi^+ = z_{1}  + i_{1} z_{2}$. 
Moreover the idempotent representation of the product of elements $\xi, \eta\in\C_2$ can be seen easily to be as:
\[
\xi\cdot\eta = \left(\xi^- \eta^-\right)e_{1} + \left(\xi^+ \eta^+\right)e_{2}.
\]

\noindent{\bf Cartesian product:} \cite{anjali2024matrix}
The $n$-times Cartesian product of $\C_2$ denoted by $\C_2^n$ and consists of all $n$-tuples of bicomplex numbers of the form $(\xi_{1}, \xi_{2},\ldots,\xi_{n})$. Each component  $\xi_{i}$ belongs to $\C_2$ for $i = 1,2,\ldots,n$. Formally, this is expressed as:
\[
{\C}_{2}^{n} \eqqcolon  \{(\xi_{1}, \xi_{2},\ldots,\xi_{n}) : \;\xi_{i} \in \C_2; \;  i = 1,2,\ldots,n\}.
\]

Furthermore, using idempotent representation, each element of $\C_2^n$ can be expressed uniquely as follows:
\[
(\xi_{1}, \xi_{2},\ldots,\xi_{n}) =  (\xi_{1}^-, \xi_{2}^-,\ldots, \xi_{n}^-)e_{1}  +  (\xi_{1}^+,  \xi_{2}^+,\ldots,\xi_{n}^+)e_{2},
\]
such that $(\xi_{1}^-, \xi_{2}^-,\ldots, \xi_{n}^-)$ and $(\xi_{1}^+,  \xi_{2}^+,\ldots,\xi_{n}^+)$ are  n-tuples of complex numbers from the space $\C_1^n$.\\

 \noindent{\bf Cartesian idempotent product:} \cite{anjali2024matrix}
The Cartesian product in bicomplex spaces is defined as follows: for any subsets \;$S_1,S_2 \subseteq \C_1^{n}$, their idempotent product, denoted $S_1\times_e S_2$,  is defined as a subset of $\C_2^{n}$ given by

\begin{eqnarray}\label{idem.def}
S_1 \times_e S_2 \eqqcolon \{e_1 x + e_2 y : x \in S_1, y \in S_2\}. 
\end{eqnarray}
Analogous to the concept of cartesian product of bicomplex number,  for any subsets \;$H_1,H_2 \subseteq \C_2^{m \times n}$, their idempotent product, denoted as $H_1\times_e H_2$ is defined to be a subset of $\C_2^{m \times n}$ given as:
\begin{eqnarray}\label{idemmatrix.def}
H_1 \times_e H_2 \eqqcolon \{e_1 A + e_2 B : A \in H_1, B \in H_2\}. 
\end{eqnarray}
\justifying { Similarly for linear maps, for any non-empty subsets $A_1,A_2$ of the space of \;$\C_1$-linear maps  $\Hom(\C_1^n,\C_1^m)$, their
idempotent product, written as $A_1\times_e A_2$, is defined to be a subset of $\Hom(\C_2^n,\C_2^m)$ given as}
\begin{eqnarray}
A_1 \times_e A_2 \eqqcolon \{e_1 T_1 + e_2 T_2 : \;T_1 \in A_1, T_2 \in A_2\}. 
\end{eqnarray}

\begin{remark}
 For the convenience, we denote the set of all $\C_1$-linear maps from $\C_1^{n}$ to $\C_1^{m}$ by $L_1^{nm}$, instead of $\Hom(\C_1^{n},\C_1^{m})$ and set  of all $\C_1$-linear maps from $\C_2^{n}$ to $\C_2^{m}$ by $L_2^{nm}$.  Clearly both $L_1^{nm}$ and  $L_2^{nm}$ are vector spaces over $\C_1$. Hence, we can see that 
\begin{eqnarray}\label{4mn}
\dim(L_1^{nm}) = m n \;\;\mbox{and}\;\; \dim(L_2^{nm}) = 4mn. 
\end{eqnarray}
\end{remark}

 \begin{definition} (\textbf{Bicomplex matrix})\label{matrix def}:  
 	A bicomplex matrix of order $m\times n$ is denoted by \;$ A=[\xi_{i j}]_{m \times n}$, $\xi_{i j} \in \C_2$\ . We let the set of all bicomplex matrices of order $m \times n$ be denoted by $\C_2^{m \times n}$, i.e., we have
\begin{eqnarray}
\C_2^{m \times n}  \eqqcolon  \Big\{[\xi_{i j}] : \; \xi_{i j} \in \C_2 ;\; i = 1,2,\ldots,m,\; j = 1,2,\ldots,n \Big\}.
\end{eqnarray} 
The usual matrix addition and scalar multiplication make the space  $\C_2^{m \times n}$ to be a vector space over the field $\C_1$. This immediately indicates that the dimension of $\C_2^{m \times n}$ over $\C_1$ is 2mn, i.e. we have
\begin{eqnarray}
    \dim (\C_2^{m \times n}(\C_1)) = 2mn.
\end{eqnarray}

Analogous to the concept of bicomplex numbers, any bicomplex matrice $A=\left[\xi_{i j}\right]_{m \times n}\in \C_2^{m \times n}$ can be decomposed uniquely as 
\begin{eqnarray}
 A = e_1 \ A^- \ + \ e_2 \  A^+,
\end{eqnarray}
where  $A^- =\left[\xi^-_{i j}\right]_{m \times n},\; A^+ =\left[\xi^+_{i j}\right]_{m \times n}$ are complex matrices.

\begin{definition}\label{defid}
(see definition \cite[2.3]{anjali2024matrix}) For any given $T_{1}, T_{2} \in L_1^{nm}$,  we can define a map $T \colon \C_2^{n}  \to \C_2^{m} $  by the following rule:
\[
T(\xi_{1}, \xi_{2},\ldots,\xi_{n}) \eqqcolon e_{1}\cdot {T}_{1} (\xi_{1}^-, \xi_{2}^-,\ldots,\xi_{n}^-)
+ e_{2}\cdot {T}_{2} (\xi_{1}^+, \xi_{2}^+,\ldots,\xi_{n}^+).
\]
\end{definition}
\noindent Clearly \;$T$\; is a $\C_1$-linear map.\;$T$\; can also be represented by $e_{1} T_{1} + e_{2} T_{2}$. Thus the set of all such linear maps is the idempotent product $L_1^{nm} \times_{e} L_1^{nm}$, i.e., we have
\begin{eqnarray}\label{idemproduct}
L_1^{nm} \times_{e} L_1^{nm} &\eqqcolon & \left \{\;e_{1} T_{1} + e_{2} T_{2} \in L_2^{nm}:\; \;T_{1}, T_{2} \in L_1^{nm}\right \}.  
\end{eqnarray}

\noindent For convenience, the set of all such type of $T= e_{1} T_{1} + e_{2} T_{2} \colon \C_2^{n}  \to \C_2^{n}$ linear operators is denoted by  $L_{1}^{n} \times_{e} L_{1}^{n}$. The idempotent product $L_1^{nm} \times_{e} L_1^{nm}$ is a subspace of $L_2^{nm}$ over the field $\C_1$. This indicates directly that $L_{1}^{nm} \times_{e} L_{1}^{nm}$ has dimension $2mn$. That is 
\begin{eqnarray}
    \dim (L_1^{nm} \times_{e} L_1^{nm}(\C_1))= 2mn.
\end{eqnarray}

The vector spaces $\C_2^{m \times n}$ and $L_{1}^{nm} \times_{e} L_{1}^{nm}$ are isomorphic because they have the same dimension over the field $\C_1$. So the matrix representation of $T= e_1 T_1 + e_2 T_2$ with respect to the  ordered bases $\mathcal{B}_{1}, \mathcal B_{2}$  for $\C_1^{n}$ and $\C_1^{m}$ respectively is defined as
\begin{eqnarray}
[T]^{\mathcal{B}_{1}}_{\mathcal{B}_{2}}\eqqcolon e_1 [T_1]^{\mathcal{B}_{1}}_{\mathcal{B}_{2}} + e_{2} [T_2]^{\mathcal{B}_{1}}_{\mathcal{B}_{2}}.
\end{eqnarray}
Where $[T_1]^{\mathcal{B}_{1}}_{\mathcal{B}_{2}}$ and $[T_2]^{\mathcal{B}_{1}}_{\mathcal{B}_{2}}$ are called the matrix of $T_1$ and $T_2$ relative to the pair of ordered 
bases $\mathcal{B}_{1}$ and $\mathcal{B}_{2}$. 
In the special case when $\C_1^{n} = \C_1^{m}$, the matrix representation of the operator $T= e_{1} T_{1} + e_{2} T_{2}$ with respect to basis $\mathcal{B}$ for $\C_1^{n}$ is simplified to  $[T]_{\mathcal{B}}$ from $[T]^{\mathcal{B}}_{\mathcal{B}}$. Thus, we have
\begin{eqnarray}\label{matrixrepre}
[T]_{\mathcal{B}} = e_{1} [T_{1}]_{\mathcal{B}} + e_{2} [T_{2}]_{\mathcal{B}}.
\end{eqnarray}

\end{definition}
\begin{proposition} (See \cite{anjali2024matrix}) \label{Proposition1}
Let $T, S \in L_{1}^{nm} \times_{e} L_{1}^{nm}$ be any elements such that $T = e_{1} T_{1} + e_{2} T_{2}$ and $S = e_{1} S_{1} + e_{2} S_{2}$. Then, we have 
\begin{enumerate}
\item $T + S = e_{1} (T_{1} + S_{1}) + e_{2} (T_{2} + S_{2})$.
\item $\alpha T = e_{1} (\alpha T_{1}) + e_{2} (\alpha T_{2}); \quad \forall \alpha \in \C_1$.
\end{enumerate}
\end{proposition}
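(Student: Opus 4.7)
The plan is to verify both identities pointwise, by evaluating each side on an arbitrary $\xi = (\xi_1, \ldots, \xi_n) \in \C_2^n$ and reducing via Definition \ref{defid} together with the basic idempotent relations $e_1 + e_2 = 1$, $e_1 e_2 = 0$, and $e_i^2 = e_i$. Since $L_1^{nm} \times_e L_1^{nm}$ sits inside the ambient $\C_1$-vector space $L_2^{nm}$, what really has to be checked is that the componentwise operations on the pair $(T_1, T_2)$ implement the vector-space operations inherited from $L_2^{nm}$; the uniqueness of the idempotent decomposition in $\C_2^m$ is what forces the identifications of the two $e_i$-components.

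For part (1), I would compute $(T+S)(\xi) = T(\xi) + S(\xi)$ by applying Definition \ref{defid} to each summand. The sum splits into an $e_1$-part and an $e_2$-part, and $\C_1$-linearity of $T_1, S_1$ on $\xi^-$ and of $T_2, S_2$ on $\xi^+$ collapses these to $e_1 (T_1+S_1)(\xi^-) + e_2 (T_2+S_2)(\xi^+)$. By Definition \ref{defid}, this is exactly the evaluation of $e_1 (T_1+S_1) + e_2 (T_2+S_2)$ at $\xi$, so the operator identity follows; in particular, the sum still lies in $L_1^{nm} \times_e L_1^{nm}$.

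For part (2), I would first note that any $\alpha \in \C_1$ has idempotent components $\alpha^- = \alpha^+ = \alpha$, because $\alpha = \alpha(e_1+e_2) = \alpha e_1 + \alpha e_2$. Then $(\alpha T)(\xi) = \alpha \cdot T(\xi) = \alpha\bigl[e_1 T_1(\xi^-) + e_2 T_2(\xi^+)\bigr]$; pulling $\alpha$ past each $e_i$ (it is a $\C_1$-scalar, so it commutes) and absorbing it into $T_i$ by $\C_1$-linearity gives $e_1(\alpha T_1)(\xi^-) + e_2(\alpha T_2)(\xi^+)$, which is precisely the right-hand side evaluated at $\xi$ under Definition \ref{defid}.

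I do not anticipate any genuine obstacle; the argument is a direct unwinding of Definition \ref{defid}, and the only step that is not completely immediate is observing that elements of $\C_1$ have trivial idempotent components, which is forced by $1 = e_1 + e_2$. The entire proposition is, in effect, the statement that the map $(T_1, T_2) \mapsto e_1 T_1 + e_2 T_2$ from $L_1^{nm} \oplus L_1^{nm}$ to $L_2^{nm}$ is $\C_1$-linear in each component, and the pointwise computation above makes this transparent.
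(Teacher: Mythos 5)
Your argument is correct: both identities follow by evaluating at an arbitrary $\xi\in\C_2^n$, unwinding Definition \ref{defid}, and using $\alpha=\alpha e_1+\alpha e_2$ together with the $\C_1$-linearity of $T_1,T_2,S_1,S_2$, and the uniqueness of the idempotent decomposition then pins down the two components. The paper itself gives no proof of this proposition (it is imported from \cite{anjali2024matrix}), and your pointwise verification is exactly the routine argument one would supply in its place; there is no gap.
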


\begin{theorem}\label{Kernel} (See, \cite{anjali2024matrix})
Let $T \in L_{1}^{nm} \times_{e} L_{1}^{nm}$ so that $T=e_{1} T_{1} + e_{2} T_{2}$ for some $T_1,T_2\in L_1^{nm}$. Then, we have
\begin{enumerate}
\item $\ker(e_{1} T_{1} + e_{2}  T_{2}) = \ker T_{1} \times_{e} \ker T_{2} $.
\item $\ran (e_{1} T_{1} + e_{2}  T_{2}) = \ran T_{1} \times_{e} \ran T_{2}  $.
\end{enumerate}
\end{theorem}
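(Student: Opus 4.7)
The plan is to prove both identities by double inclusion, leaning on two ingredients already established in Section~1: the defining formula for $T$ in Definition~\ref{defid}, which gives $T\xi = e_1 T_1(\xi^-) + e_2 T_2(\xi^+)$, and the uniqueness of the idempotent decomposition, which in particular yields $e_1 a + e_2 b = 0 \iff a = 0$ and $b = 0$ (applied coordinate-wise in $\C_2^m$).

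For part~(1), I take an arbitrary $\xi = (\xi_1,\ldots,\xi_n) \in \C_2^n$ and write it in idempotent form $\xi = e_1 \xi^- + e_2 \xi^+$ where $\xi^- = (\xi_1^-,\ldots,\xi_n^-)$ and $\xi^+ = (\xi_1^+,\ldots,\xi_n^+)$ lie in $\C_1^n$. Substituting into the formula from Definition~\ref{defid}, the equation $T\xi = 0$ becomes $e_1 T_1(\xi^-) + e_2 T_2(\xi^+) = 0$, and by uniqueness this splits into $T_1(\xi^-) = 0$ and $T_2(\xi^+) = 0$, i.e.\ $\xi^- \in \ker T_1$ and $\xi^+ \in \ker T_2$. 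Reading this against definition~(\ref{idem.def}) of $\times_e$ identifies $\ker T$ with $\ker T_1 \times_e \ker T_2$, and the chain of equivalences is reversible, giving both inclusions.

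For part~(2), the forward inclusion is immediate: every element of $\ran T$ is of the form $T\xi = e_1 T_1(\xi^-) + e_2 T_2(\xi^+)$, and since $T_1(\xi^-) \in \ran T_1$ and $T_2(\xi^+) \in \ran T_2$, this element lies in $\ran T_1 \times_e \ran T_2$ by~(\ref{idem.def}). For the reverse inclusion, I take $w = e_1 x + e_2 y$ with $x = T_1 u$ and $y = T_2 v$ for some $u,v \in \C_1^n$, and produce the preimage $\xi := e_1 u + e_2 v \in \C_2^n$. The idempotent components of this $\xi$ are precisely $\xi^- = u$ and $\xi^+ = v$, so Definition~\ref{defid} gives $T\xi = e_1 T_1(u) + e_2 T_2(v) = e_1 x + e_2 y = w$, as required.

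No serious obstacle is expected: the argument is essentially bookkeeping that transfers the idempotent decomposition of $\C_2$ (and its uniqueness) from scalars to tuples and then through the linear map $T$. The only point requiring a little care is checking that when $\xi = e_1 u + e_2 v$ with $u,v \in \C_1^n$, the idempotent components are indeed $\xi^- = u$ and $\xi^+ = v$; this follows component-wise from the scalar identity $\xi_i = \xi_i^- e_1 + \xi_i^+ e_2$ recalled in Section~1, which makes the construction of preimages in part~(2) well-defined.
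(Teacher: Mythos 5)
Your proof is correct. Note, however, that the paper itself offers no proof of Theorem \ref{Kernel}: the result is imported verbatim from \cite{anjali2024matrix}, so there is no in-paper argument to compare yours against. Your double-inclusion argument is the standard one and works as written — for part (1), the equation $e_1T_1(\xi^-)+e_2T_2(\xi^+)=0$ splits into $T_1(\xi^-)=0$ and $T_2(\xi^+)=0$ by the uniqueness of the idempotent decomposition applied coordinate-wise, and for part (2) the preimage $\xi=e_1u+e_2v$ does the job precisely because, as you verify, its idempotent components are $u$ and $v$; that verification is the one point where care is needed and you handle it correctly.
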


\section{Eigenvalues and eigenvectors associated with $T= e_1 T_1 + e_2 T_2$}
Earlier, \cite {anjali2024matrix} had given an "Idempotent method" for matrix representation of a linear map of the type $T= e_1 T_1 + e_2 T_2 : \C_2^{n} \longrightarrow \C_2^{m} $. This method gives a one-one correspondence between the bicomplex matrices $A=[\xi_{i j}]_{n \times n}$ and the linear operator's $T= e_1 T_1 + e_2 T_2 $ on finite dimensional vector space $\C_2^{n}$. This unique correspondence can help to develop the concept of eigenvalue and eigenvector of a bicomplex matrix $A=[\xi_{i j}]_{n \times n}$. For this reason, this linear operator $T$ is vital for our analysis and plays an essential role from the bicomplex matrix point of view. We refer the reader to \cite {anjali2024matrix} for further details. This section aims to analyze the facts and find the essential results related to eigenvalue and eigenvector of $T= e_1 T_1 + e_2 T_2 $.

\begin{definition}(Non-singular and Singular vectors in $\C_2^n$) \label{rem 1}:
A vector $(\xi_{1},\xi_{2},\ldots,\xi_{n}) \in \C_2^n$ is non-singular if at least one of $\xi_i, i=1,2 \ldots,n$ is non-singular otherwise it is singular. Furthermore, for any $(\xi_{1},\xi_{2},\ldots,\xi_{n}) \in \C_2^n$ is non-singular then $(\xi_{1}^-, \xi_{2}^-,\ldots,\xi_{n}^-) \neq 0$ and $(\xi_{1}^+, \xi_{2}^+,\ldots,\xi_{n}^+)  \neq 0$ but converse need not be true.  
\end{definition}

\begin{example}
	Here, $(1,0,\ldots,0)$ and $(0,1,\ldots,0)$ are non-zero vectors of $\C_1^n$ but $ e_1 (1,0,\ldots,0) +  e_2 (0,1,\ldots,0)= (e_1,e_2,\ldots,0)$ is a singular vector of $\C_2^n$.
\end{example}

\begin{remark}\label{rem 1.1}
If $0 \neq (\xi_{1},\xi_{2},\ldots,\xi_{n})   \in \C_2^n $  \quad \IFF \quad either $ (\xi_{1}^-, \xi_{2}^-,\ldots,\xi_{n}^-)  \neq 0 $ \ or \ $ (\xi_{1}^+, \xi_{2}^+,\ldots,\xi_{n}^+)  \neq 0$.   
\end{remark}
\begin{definition} \label{def1 Ev}(Modified eigenvalue and Modified eigenvector)
: Let $T= e_1 T_1 + e_2 T_2 \in L_1^{n} \times_e  L_1^{n} $ be a linear operator on $\C_2^n(\C_1)$. A scalar,  $\lambda \in \C_1$ is an \textbf{eigenvalue} of $T$ if there exist a non-zero vector  $ (\xi_{1},\xi_{2},\ldots,\xi_{n}) \in \C_2^n$ such that $T (\xi_{1},\xi_{2},\ldots,\xi_{n}) = \lambda 
(\xi_{1},\xi_{2},\ldots,\xi_{n})$. Then the vector $(\xi_{1},\xi_{2},\ldots,\xi_{n})$ is called an \textbf{eigenvector} of $T$ associated with the eigenvalue $\lambda \in \C_1$. Furthermore, if $\kappa \in \C_2$ with $T (\xi_{1},\xi_{2},\ldots,\xi_{n}) = \kappa (\xi_{1},\xi_{2},\ldots,\xi_{n})$, then $\kappa$ is called a \textbf{modified eigenvalue} of  $T$ and the non zero vector $ (\xi_{1},\xi_{2},\ldots,\xi_{n})$ is called a \textbf{modified eigenvector} of $T$ associated with the modified eigenvalue $\kappa$.
\end{definition}

\begin{remark}
	Every eigenvalue (eigenvector) of $T= e_1 T_1 + e_2 T_2$ is a modified eigenvalue (modified eigenvector) of T but converse need not be true.
\end{remark}

\begin{example}\label{26}
Let $T_1, T_2  \colon \C_1^{2} \to \C_1^{2}$ be two linear operators such that 
\[
T_1(x,y)=(x,0)\quad\mbox{and}\quad T_2(x,y) = (x,y)\quad\forall\;(x,y)\in \C_1^2.\]

It is evident that  $T=e_1 T_1 + e_2 T_2 \colon \C_2^2(\C_1)  \to \C_2^2(\C_1)$ such that $T(\xi, \eta)= (\xi, \eta e_2)$. It is a routine matter to check that $(1, e_2)$ is an eigenvector (modified eigenvector) of T associated with the eigenvalue (modified eigenvalue) 1. Moreover, $(e_1, 0)$ is a modified eigenvector of T associated with the modified eigenvalue $1e_1 + re_2$, for $r\in \C_1$.  
\end{example}

\begin{theorem} \label{th1}
A linear operator $T = e_1T_1 + e_2T_2 \in L_1^{n} \times_e L_1^{n}$ is singular $\IFF$  either $T_1$ is singular or $T_2$ is singular.
\end{theorem}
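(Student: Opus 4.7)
The plan is to read off this statement from the kernel description given in Theorem~\ref{Kernel}, interpreting ``singular'' for a linear operator on a finite dimensional space to mean non-invertible, equivalently having a non-trivial kernel. Thus the statement rephrases as: $\ker T \neq \{0\} \iff \ker T_1 \neq \{0\} \text{ or } \ker T_2 \neq \{0\}$. By Theorem~\ref{Kernel}, $\ker T = \ker T_1 \times_e \ker T_2$, so the whole proof reduces to unwinding Definition~\ref{rem 1} and Remark~\ref{rem 1.1} on when an idempotent-product vector is zero.

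For the ($\Leftarrow$) direction, I will assume without loss of generality that $T_1$ is singular (the argument for $T_2$ is symmetric), pick $0 \neq x \in \ker T_1 \subseteq \C_1^n$, and form the vector $v := e_1 x + e_2 \cdot 0 \in \C_2^n$. Since the $e_1$-component of $v$ is non-zero, Remark~\ref{rem 1.1} gives $v \neq 0$. On the other hand, $v \in \ker T_1 \times_e \ker T_2$ (using $0 \in \ker T_2$), so by Theorem~\ref{Kernel}, $v \in \ker T$, exhibiting that $T$ is singular.

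For the ($\Rightarrow$) direction, I will take $0 \neq v \in \ker T$ and use Theorem~\ref{Kernel} to write $v = e_1 x + e_2 y$ with $x \in \ker T_1$, $y \in \ker T_2$. Since $v \neq 0$, Remark~\ref{rem 1.1} yields that at least one of $x$ or $y$ is non-zero; whichever is non-zero witnesses a non-trivial kernel for the corresponding $T_i$, so either $T_1$ or $T_2$ is singular.

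There is essentially no obstacle: the main technical ingredient, namely that the kernel factors through the idempotent product, is already available as Theorem~\ref{Kernel}, and the remaining work is just the observation via Remark~\ref{rem 1.1} that a vector in $\C_2^n$ is non-zero exactly when at least one of its two idempotent components is non-zero. The only point requiring a moment of care is that ``singular'' must be interpreted consistently on both sides as ``non-invertible,'' but since all three operators act between finite dimensional $\C_1$-vector spaces of equal dimension, this is equivalent to having a non-trivial kernel in each case, which is exactly the form in which Theorem~\ref{Kernel} applies.
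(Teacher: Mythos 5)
Your proposal is correct and follows essentially the same route as the paper's proof: both reduce singularity of $T$ to non-triviality of $\ker T = \ker T_1 \times_e \ker T_2$ via Theorem~\ref{Kernel} and then invoke Remark~\ref{rem 1.1} to see that such a vector is non-zero precisely when one of its idempotent components is. The only cosmetic difference is that you argue the two implications separately while the paper writes a single chain of equivalences.
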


\begin{proof}
Suppose that $T=e_1 T_1 + e_2 T_2 \colon \C_2^n(\C_1)  \to \C_2^n(\C_1)$ is singular. Using \ref{Kernel} and \ref{rem 1.1}, it follows that 
\begin{eqnarray*}
  ker(e_1 T_1 + e_2 T_2) \neq \{0\}
   &\Leftrightarrow&  ker(T_1 ) \times_e ker(T_2) \neq \{0\}\\ 
  &\Leftrightarrow& \mbox{there exists a non-zero vecctor}\\ 
  && (z_1,z_2,\ldots,z_n) e_1 + (w_1,w_2,\ldots,w_n) e_2 \in ker(T_1 ) \times_e ker(T_2)\\
  && \mbox{such that either}\ 0\neq (z_1,z_2,\ldots,z_n) \in ker(T_1 )\ \mbox{or}\\
  && 0\neq (w_1,w_2,\ldots,w_n) \in ker(T_2 )\\
  &\Leftrightarrow& \mbox {either} \quad ker(T_1)  \neq \{0\} \quad \mbox{or} \quad  ker(T_2)  \neq \{0\} \\
   &\Leftrightarrow&  \mbox {either} \quad  T_1 \quad \mbox {is singular} \quad \mbox{or} \quad T_2 \quad   \mbox {is singular}.
\end{eqnarray*} 
 Thus the theorem is proved.
\end{proof}
\begin{definition} \label{def 2}
	If $\eta= \eta^- e_1 + \eta^+ e_2 \in \C_2$ and $T= e_1 T_1 + e_2T_2 \in L_1^{nm} \times_{e} L_1^{nm}$, then we define a linear transformation $\eta T$ as
	\begin{center}
		$\eta \ T = e_1 (\eta^- \ T_1) + e_2 (\eta^+ \
		T_2)$.
	\end{center}
	 It is a trivial matter to check that $\eta \ T \in L_1^{nm} \times_{e} L_1^{nm}$
	  and if 
	  $I$ represents an identity linear operator then $I=e_1 I + e_2I  \in L_1^{n} \times_e L_1^{n}$ and 
	\begin{center}
		$\eta \ I = e_1 (\eta^- \ I) + e_2 (\eta^+ \
		I)$.
	\end{center}
\end{definition}
\begin{theorem}\label{210}
	Let $T= e_1 T_1 + e_2T_2 \in L_1^{n} \times_{e} L_1^{n}$, $I=e_1 I + e_2I  \in L_1^{n} \times_e L_1^{n}$ be the identity linear operator and $\kappa= \kappa_1 e_1 + \kappa_2 e_2 \in \C_{2}$, then $ (T-\kappa I)$ is singular if and only if either $(T_1 - \kappa_1 I)$ is singular or $(T_2 - \kappa_2 I)$ is singular. 
\end{theorem}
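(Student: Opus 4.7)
The plan is to reduce this statement to the already-proved singularity criterion in Theorem \ref{th1} by writing $T-\kappa I$ explicitly in idempotent form. The work is essentially bookkeeping: once we recognize $T-\kappa I$ as an element of $L_1^n\times_e L_1^n$ with the expected idempotent components, the result drops out.

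First, I would invoke Definition \ref{def 2} on the scalar multiple $\kappa I$. Since $\kappa=\kappa_1 e_1+\kappa_2 e_2$ and $I=e_1 I+e_2 I$, the definition immediately gives
\[
\kappa I \;=\; e_1(\kappa_1 I)+e_2(\kappa_2 I),
\]
so $\kappa I \in L_1^n \times_e L_1^n$.

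Next, I would combine this with $T=e_1 T_1+e_2 T_2$ using Proposition \ref{Proposition1}. Applying the additive formula (and the fact that $-\kappa I$ has idempotent components $-\kappa_1 I$ and $-\kappa_2 I$), I obtain
\[
T-\kappa I \;=\; e_1(T_1-\kappa_1 I)+e_2(T_2-\kappa_2 I),
\]
which exhibits $T-\kappa I$ as an element of $L_1^n\times_e L_1^n$ with idempotent components $T_1-\kappa_1 I$ and $T_2-\kappa_2 I$.

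Finally, I would apply Theorem \ref{th1} directly to the operator $T-\kappa I$ in this idempotent form. That theorem says an operator of the shape $e_1 S_1+e_2 S_2$ is singular iff $S_1$ is singular or $S_2$ is singular; setting $S_1=T_1-\kappa_1 I$ and $S_2=T_2-\kappa_2 I$ gives exactly the required equivalence. There is no real obstacle here; the only place to be careful is making sure Proposition \ref{Proposition1} is applied correctly so that the idempotent decomposition of $T-\kappa I$ is legitimate, after which Theorem \ref{th1} closes the argument in one line.
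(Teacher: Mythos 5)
Your proposal is correct and follows essentially the same route as the paper: both decompose $T-\kappa I$ as $e_1(T_1-\kappa_1 I)+e_2(T_2-\kappa_2 I)$ via Definition \ref{def 2} and Proposition \ref{Proposition1}, and then apply the singularity criterion of Theorem \ref{th1}. Your write-up is, if anything, slightly more careful in making the use of Definition \ref{def 2} explicit.
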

\begin{proof}
	Let us suppose that $\kappa= \kappa_1 e_1 + \kappa_2 e_2 \in \C_{2}$ and $ (T-\kappa I)$ be singular. We use \ref{Proposition1} and \ref{th1} throughout this proof.\\
	 $ \Leftrightarrow (e_1 T_1 + e_2 T_2)+\{e_1(-\kappa_1I)+ e_2(-\kappa_2I)\}$ is singular.\\
	$ \Leftrightarrow e_1 (T_1-\kappa_1I) + e_2 (T_2-\kappa_2I)$ is singular.\\ 
	$\Leftrightarrow$ either $(T_1-\kappa_1I)$ is singular or $(T_2-\kappa_2I)$ is singular.\\
	Thus the theorem is proved.
\end{proof}
Dually we may prove the following corollary for $ \lambda \in \C_1$.
\begin{corollary}\label{31}
	Let $T= e_1 T_1 + e_2T_2 \in L_1^{n} \times_{e} L_1^{n}$, $I=e_1 I + e_2I  \in L_1^{n} \times_e L_1^{n}$ be the identity linear operator and $\lambda \in \C_{1}$, then $ (T-\lambda I)$ is singular if and only if either $(T_1 - \lambda I)$ is singular or $(T_2 - \lambda I)$ is singular. 
\end{corollary}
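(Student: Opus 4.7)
The plan is to deduce Corollary \ref{31} as a direct specialization of Theorem \ref{210}, exploiting the fact that $\C_1$ embeds into $\C_2$ via the relation $1 = e_1 + e_2$. So the whole argument reduces to recognizing the correct choice of $\kappa_1,\kappa_2$ in the hypothesis of Theorem \ref{210}.

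First, I would observe that any scalar $\lambda\in\C_1$ can be regarded as a bicomplex number whose idempotent components coincide: using $e_1+e_2=1$, one writes
\[
\lambda \;=\; \lambda\cdot 1 \;=\; \lambda(e_1+e_2) \;=\; \lambda e_1 + \lambda e_2,
\]
so that $\lambda\in\C_2$ corresponds to the pair $(\kappa_1,\kappa_2)=(\lambda,\lambda)$ in the idempotent representation.

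Next, I would apply Theorem \ref{210} with this $\kappa=\lambda e_1+\lambda e_2$. Since $\lambda I$ acting on $\C_2^n$ decomposes as $e_1(\lambda I)+e_2(\lambda I)$ by Definition \ref{def 2}, the operator $T-\lambda I$ lies in $L_1^n\times_e L_1^n$ and admits the idempotent decomposition $e_1(T_1-\lambda I)+e_2(T_2-\lambda I)$. Theorem \ref{210} then immediately tells us that $T-\lambda I$ is singular if and only if either $T_1-\lambda I$ or $T_2-\lambda I$ is singular, which is exactly the conclusion.

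There is essentially no obstacle here: the content has already been established in Theorem \ref{210} (which itself rested on Theorem \ref{th1} applied to $e_1(T_1-\kappa_1 I)+e_2(T_2-\kappa_2 I)$). The only point worth flagging is the mild abuse of notation whereby $\lambda I\in L_1^n\times_e L_1^n$ is identified with $e_1(\lambda I)+e_2(\lambda I)$; once this identification is made explicit via Definition \ref{def 2}, the corollary follows in one line. I would therefore present the proof as a two-sentence reduction rather than reproving the singularity criterion from scratch.
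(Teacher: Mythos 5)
Your reduction is correct and matches the paper's intent: the paper itself gives no separate proof, stating only that the corollary follows ``dually'' from Theorem \ref{210}, and your explicit specialization $\kappa=\lambda e_1+\lambda e_2$ (i.e.\ $\kappa_1=\kappa_2=\lambda$) is exactly the argument being gestured at. Nothing further is needed.
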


\section{Relationship between the set of eigenvalues and the set of modified eigenvalues} 
In this section, we intend to define and analyze three sets of significant importance for the eigenvalue of $T= e_1 T_1 + e_2  T_2$. These sets are crucial in understanding the eigenvalue and modified eigenvalue of $T=e_1 T_1 + e_2 T_2 $. We also deal eigenspace and modified eigenspace and discuss some related results.
\begin{theorem}\label{29}
	Let $T= e_1 T_1 + e_2T_2 \in L_1^{n} \times_{e} L_1^{n}$ and $I=e_1 I + e_2I  \in L_1^{n} \times_e L_1^{n}$ be the identity linear operator, then a bicomplex number $\kappa= \kappa_1 e_1 + \kappa_2 e_2$ is a modified eigenvalue of $T$ if and only if $ (T-\kappa I)$ is singular.
\end{theorem}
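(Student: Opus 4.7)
The plan is to prove the biconditional by unwinding it to the standard characterization $\ker(T - \kappa I) \neq \{0\}$, and then to lean on Theorem \ref{210} in the reverse direction to lift a one-sided kernel vector to a genuine modified eigenvector.

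For the forward implication, I will start by assuming $\kappa = \kappa_1 e_1 + \kappa_2 e_2$ is a modified eigenvalue. Unwrapping Definition \ref{def1 Ev}, this hands me a nonzero vector $v = (\xi_1,\ldots,\xi_n) \in \C_2^n$ satisfying $T v = \kappa v$, equivalently $(T - \kappa I) v = 0$. By Theorem \ref{Kernel} combined with Remark \ref{rem 1.1}, $\ker(T - \kappa I) \neq \{0\}$ is exactly the assertion that $T - \kappa I$ is singular as a linear operator on $\C_2^n$, so this direction is immediate.

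For the converse, assume $T - \kappa I$ is singular. By Theorem \ref{210}, this means that at least one of $T_1 - \kappa_1 I$ or $T_2 - \kappa_2 I$ is singular on $\C_1^n$. Without loss of generality suppose $T_1 - \kappa_1 I$ is singular, so there exists $0 \neq x \in \C_1^n$ with $T_1 x = \kappa_1 x$. I will then define the candidate modified eigenvector
\[
v \eqqcolon e_1 x + e_2 \cdot 0 \in \C_2^n,
\]
which by Remark \ref{rem 1.1} is nonzero because its first idempotent component $x$ is nonzero (here it is essential that Definition \ref{def1 Ev} requires only a nonzero modified eigenvector, not a non-singular one, so we are free to use such a singular $v$). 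Using the rule in Definition \ref{defid} for applying $T$ and the idempotent multiplication formula for the product of two bicomplex quantities, I compute
\[
T v = e_1 T_1(x) + e_2 T_2(0) = e_1 (\kappa_1 x) = (\kappa_1 e_1 + \kappa_2 e_2)(e_1 x) = \kappa v,
\]
which exhibits $\kappa$ as a modified eigenvalue with eigenvector $v$. The case where $T_2 - \kappa_2 I$ is singular is handled symmetrically by taking $v = e_2 y$ for a nonzero $y \in \ker(T_2 - \kappa_2 I)$.

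I do not anticipate any real obstacle: all the structural work has already been done in Theorems \ref{Kernel} and \ref{210}. The only subtle point worth flagging carefully in the write-up is the role of the weakened non-triviality requirement in the definition of modified eigenvector; without it, the converse could fail, since the lifted vector $e_1 x$ lies in the ideal $I_1$ and is therefore singular in $\C_2^n$, yet it is still nonzero and therefore legitimate here.
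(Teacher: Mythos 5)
Your proof is correct and follows essentially the same route as the paper's: both reduce the singularity of $T-\kappa I$ to that of the components $T_i-\kappa_i I$ via Theorem \ref{210} and pass back and forth through the idempotent decomposition. If anything, your version is slightly more careful, since you split the two directions and explicitly construct the witness $v=e_1x+e_2\cdot 0$ in the converse, a step the paper hides inside its chain of biconditionals.
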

\begin{proof}
	Let $\kappa \in \C_2$ be a modified eigenvalue of $T $. We use \ref{rem 1.1} and \ref{210} throughout this proof. This implies that there exists a non zero vector  $ (\xi_1,\xi_2,\ldots,\xi_n) $ such that
	\begin{eqnarray*}
		T (\xi_{1},\xi_{2},\ldots,\xi_{n}) &=& \kappa (\xi_{1},\xi_{2},\ldots,\xi_{n}),\ (\xi_{1},\xi_{2},\ldots,\xi_{n}) \neq 0\\
		\Leftrightarrow  e_{1} {T}_{1} (\xi_{1}^-, \xi_{2}^-,\ldots,\xi_{n}^-)
		+ e_{2} {T}_{2} (\xi_{1}^+, \xi_{2}^+,\ldots,\xi_{n}^+) &=&  e_1 \kappa_1 (\xi_{1}^-, \xi_{2}^-,\ldots,\xi_{n}^-) + e_2 \kappa_2 (\xi_{1}^+, \xi_{2}^+,\ldots,\xi_{n}^+)\\
		\Leftrightarrow  {T}_{1} (\xi_{1}^-, \xi_{2}^-,\ldots,\xi_{n}^-)= \kappa_1(\xi_{1}^-, \xi_{2}^-,\ldots, \xi_{n}^-) &\mbox{or}& {T}_{2} (\xi_{1}^+, \xi_{2}^+,\ldots,\xi_{n}^+)= \kappa_2(\xi_{1}^+, \xi_{2}^+,\ldots, \xi_{n}^+)\\
		\mbox{such that}\ (\xi_{1}^-, \xi_{2}^-,\ldots, \xi_{n}^-) \neq 0 && \mbox{such that}\ (\xi_{1}^+, \xi_{2}^+,\ldots,\xi_{n}^+) \neq 0 
	\end{eqnarray*}
	either $\kappa_1$ is eigenvalue of $T_1$ or $\kappa_2$ is eigenvalue of $T_2$\\
	$\Leftrightarrow$ either $(T_1-\kappa_1I)$ is singular or $(T_2-\kappa_2I)$ is singular\\
	$ \Leftrightarrow (T - \kappa I)$ is singular.\\
	Thus the theorem is proved.
\end{proof}
Dually we may prove the following corollary for $ \lambda \in \C_1$.
\begin{corollary}\label{rem2.1} 
	Let $T= e_1 T_1 + e_2T_2 \in L_1^{n} \times_{e} L_1^{n}$ and $I=e_1 I + e_2I  \in L_1^{n} \times_e L_1^{n}$ be the identity linear operator then, a complex number $\lambda$ is an eigenvalue of $T$ if and only if $ (T-\lambda I)$ is singular. 
\end{corollary}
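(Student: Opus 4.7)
The plan is to mirror the proof of Theorem \ref{29}, specialised to the case when the scalar lies in $\C_1$. The key observation is that any $\lambda\in\C_1$ sits inside $\C_2$ as a bicomplex number with identical idempotent components, namely $\lambda = \lambda e_1 + \lambda e_2$, so $\lambda^- = \lambda^+ = \lambda$. Writing $T-\lambda I = e_1(T_1-\lambda I) + e_2(T_2-\lambda I)$ via Definition \ref{def 2} and Proposition \ref{Proposition1} is therefore immediate, and reduces the problem to an argument about the kernels of $T_1-\lambda I$ and $T_2-\lambda I$.

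First I would unfold the definition of eigenvalue: $\lambda\in\C_1$ is an eigenvalue of $T$ iff there is a nonzero $(\xi_1,\ldots,\xi_n)\in\C_2^n$ with $T(\xi_1,\ldots,\xi_n)=\lambda(\xi_1,\ldots,\xi_n)$. Applying Definition \ref{defid} to the left side and expanding the right side using $\lambda = \lambda e_1 + \lambda e_2$, the equation becomes
\begin{eqnarray*}
e_1 T_1(\xi_1^-,\ldots,\xi_n^-) + e_2 T_2(\xi_1^+,\ldots,\xi_n^+)
&=& e_1\,\lambda(\xi_1^-,\ldots,\xi_n^-) + e_2\,\lambda(\xi_1^+,\ldots,\xi_n^+).
\end{eqnarray*}
By uniqueness of the idempotent decomposition in $\C_2^n$, this splits into the two equations $T_1(\xi_1^-,\ldots,\xi_n^-)=\lambda(\xi_1^-,\ldots,\xi_n^-)$ and $T_2(\xi_1^+,\ldots,\xi_n^+)=\lambda(\xi_1^+,\ldots,\xi_n^+)$.

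Next I would use Remark \ref{rem 1.1}: nonzeroness of $(\xi_1,\ldots,\xi_n)\in\C_2^n$ is equivalent to at least one of the complex tuples $(\xi_1^-,\ldots,\xi_n^-)$ or $(\xi_1^+,\ldots,\xi_n^+)$ being nonzero. Combining this with the split equations above shows that the existence of such a bicomplex eigenvector is equivalent to: either $\lambda$ is a (classical) eigenvalue of $T_1$, or $\lambda$ is a (classical) eigenvalue of $T_2$; equivalently, either $T_1-\lambda I$ is singular or $T_2-\lambda I$ is singular. Invoking Corollary \ref{31} converts this disjunction back into the single statement that $T-\lambda I$ is singular, completing the equivalence.

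No serious obstacle is anticipated; the whole argument is a specialisation of Theorem \ref{29} to the case $\kappa_1=\kappa_2=\lambda\in\C_1$, and the only point that requires a little care is to make sure that the nonzero-vector hypothesis is handled correctly through Remark \ref{rem 1.1}, so that the ``either/or'' on the two components survives the equivalence chain.
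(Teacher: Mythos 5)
Your proposal is correct and is essentially the paper's own argument: the paper gives no separate proof of this corollary, stating only that it follows ``dually'' from Theorem \ref{29}, and your specialisation to $\kappa_1=\kappa_2=\lambda$ with $\lambda=\lambda e_1+\lambda e_2$, together with the appeal to Remark \ref{rem 1.1} and Corollary \ref{31}, is exactly that intended dual argument. The one point you handle more carefully than the paper's Theorem \ref{29} template is making explicit that the idempotent decomposition forces \emph{both} component equations to hold while the ``either/or'' lives only in the nonvanishing condition, which is the right reading.
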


\begin{definition}
Let $T= e_1 T_1 + e_2T_2 \in L_1^{n} \times_{e} L_1^{n}$ then the set of all modified eigenvalues of $T$, eigenvalues of $T_1$ and  $T_2$
are denoted by the sets  $\Upsilon, \Upsilon_1$ and $\Upsilon_2$ respectively, that is
\begin{eqnarray*}
   \Upsilon &\eqqcolon& \mbox{Set of all modified eigenvalues of} \ T\\
 \Upsilon_1 &\eqqcolon& \mbox{Set of all eigenvalues of} \ T_1\\
 \Upsilon_2 &\eqqcolon& \mbox{Set of all  eigenvalues of} \ T_2  .
\end{eqnarray*}
 \end{definition}
The idempotent product of the sets  $\Upsilon_1 $   and $\Upsilon_2$ is 
\begin{center}
	$\Upsilon_1 \times_e \Upsilon_2 \eqqcolon \{\kappa_1 e_1 + \kappa_2 e_2: \kappa_1 \in \Upsilon_1 , \kappa_2 \in \Upsilon_2\}$ .   
\end{center}
\noindent{By defining and examining these sets, we can get valuable results for an eigenvalue and modified eigenvalue of  $T=e_1 T_1 + e_2 T_2$.}

\noindent{The next theorem connects the set $\Upsilon$ with the set   $\Upsilon_1 \times_e \Upsilon_2 $.}

\begin{theorem} \label{th5}
The set $\Upsilon_1 \times_e \Upsilon_2 $ is contained in the set $\Upsilon$, that is 
\begin{center}
 $\Upsilon_1 \times_e \Upsilon_2 \subset \Upsilon$.   
\end{center}
\end{theorem}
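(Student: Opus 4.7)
The plan is to take an arbitrary element of $\Upsilon_1\times_e\Upsilon_2$ and exhibit an explicit non-zero modified eigenvector witnessing that it lies in $\Upsilon$. Concretely, fix $\kappa=\kappa_1e_1+\kappa_2e_2\in\Upsilon_1\times_e\Upsilon_2$, so that $\kappa_1$ is an eigenvalue of $T_1$ and $\kappa_2$ is an eigenvalue of $T_2$. Choose eigenvectors $v=(v_1,\dots,v_n)\in\C_1^n$ with $T_1v=\kappa_1 v$, $v\neq 0$, and $w=(w_1,\dots,w_n)\in\C_1^n$ with $T_2w=\kappa_2 w$, $w\neq 0$.

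The next step is to form the candidate bicomplex vector $\zeta=e_1 v+e_2 w\in\C_2^n$ and verify that $T\zeta=\kappa\zeta$. By Definition \ref{defid}, $T\zeta=e_1 T_1 v+e_2 T_2 w=e_1\kappa_1 v+e_2\kappa_2 w$, while using the idempotent rules $e_1^2=e_1$, $e_2^2=e_2$, $e_1e_2=0$ one gets $\kappa\zeta=(\kappa_1 e_1+\kappa_2 e_2)(e_1 v+e_2 w)=\kappa_1 v e_1+\kappa_2 w e_2$. These two expressions coincide, so the eigenvector equation holds.

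It remains to observe that $\zeta\neq 0$. Since $v\neq 0$ (already $w\neq 0$ would suffice on its own), Remark \ref{rem 1.1} gives $\zeta=e_1 v+e_2 w\neq 0$ in $\C_2^n$. Hence $\kappa$ is a modified eigenvalue of $T$ with modified eigenvector $\zeta$, i.e.\ $\kappa\in\Upsilon$, proving $\Upsilon_1\times_e\Upsilon_2\subset\Upsilon$.

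Alternatively, one could avoid constructing $\zeta$ explicitly and instead chain the machinery already proved: $\kappa_1\in\Upsilon_1$ makes $T_1-\kappa_1 I$ singular (by Corollary \ref{rem2.1}), Theorem \ref{210} then upgrades this to singularity of $T-\kappa I$, and Theorem \ref{29} concludes $\kappa\in\Upsilon$. There is no real obstacle here; the only subtlety worth emphasising is why the containment is one-sided: Theorem \ref{29} shows $\kappa\in\Upsilon$ whenever \emph{at least one} of $\kappa_1,\kappa_2$ is an eigenvalue of the corresponding $T_i$, whereas membership in $\Upsilon_1\times_e\Upsilon_2$ demands that \emph{both} hold. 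This asymmetry explains why the reverse inclusion cannot be expected in general, and it is what the subsequent theory in the paper will need to address.
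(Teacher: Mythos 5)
\noindent Your proposal is correct and follows essentially the same route as the paper's own proof: pick eigenvectors $v$ of $T_1$ and $w$ of $T_2$, form $e_1v+e_2w$, verify the eigenvector equation componentwise, and invoke Remark \ref{rem 1.1} for non-vanishing. The alternative chain through Theorems \ref{210} and \ref{29} that you sketch is also sound, but the primary argument matches the paper's.
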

\begin{proof}
Let us suppose that $\kappa = \kappa_1 e_1 + \kappa_2 e_2 \in \Upsilon_1 \times_e \Upsilon_2 $, then
\begin{eqnarray*}
  && \kappa_1 \ \mbox{is an eigenvalue of } \  T_1 \ \mbox{and}  \  \kappa_2 \  \mbox{is an eigenvalue of } \ T_2 \\
  &\Rightarrow& \exists \quad 0 \neq  (v_1, v_2,\ldots ,v_n) \in \C_1^n  \quad \mbox{and} \quad 0\neq  (w_1, w_2,\ldots, w_n) \in \C_1^n \quad  \mbox{such that} \quad\\
& & T_1(v_1, v_2,\ldots, v_n)= \kappa_1 (v_1, v_2,\ldots ,v_n) \quad \mbox{and} \quad T_2(w_1, w_2,\ldots, w_n)= \kappa_2 (w_1, w_2,\ldots ,w_n)\\
&\Rightarrow& e_1 T_1(v_1, v_2,\ldots, v_n) + e_2 T_2(w_1, w_2,\ldots ,w_n) = e_1 \kappa_1 (v_1, v_2,\ldots, v_n)  + e_2 \kappa_2 (w_1, w_2,\ldots ,w_n) \\
&\Rightarrow& (e_1 T_1 + e_2 T_2) [e_1 (v_1, v_2,\ldots ,v_n) + e_2(w_1, w_2,\ldots ,w_n)]=\kappa [e_1(v_1, v_2,\ldots ,v_n)  +   e_2(w_1, w_2,\ldots, w_n) ]\\
&& \mbox{In view of} \ \ref{rem 1.1},\ e_1(v_1, v_2,\ldots, v_n)  +   e_2(w_1, w_2,\ldots, w_n) \neq 0 \\
&\Rightarrow& \kappa= \kappa_1 e_1 + \kappa_2 e_2 \quad  \mbox{is a modified eigenvalue of} \quad T=e_1 T_1 + e_2 T_2.\\
&\Rightarrow& \kappa = \kappa_1 e_1 + \kappa_2 e_2 \in \Upsilon. 
\end{eqnarray*}
Therefore, $\Upsilon_1 \times_e \Upsilon_2 \subset \Upsilon$.\\
Thus the theorem is proved.
\end{proof}
\begin{remark}
	In view of \ref{26}, the set $\Upsilon_1=\left\lbrace 0, 1\right\rbrace $ and $\Upsilon_2=\left\lbrace 1\right\rbrace $ but $1e_1 + 2e_2$ is a modified eigenvalue of $T=e_1T_1+e_2T_2$. It yields that the set $\Upsilon_1 \times_e \Upsilon_2 $ is properly contained in the set $\Upsilon$. That is, in case of \ref{26},
\begin{eqnarray}\label{11}
\Upsilon_1 \times_e \Upsilon_2 \subsetneq \Upsilon.
\end{eqnarray}
\end{remark}
We will prove further (\ref{11}) for arbitrary $\Upsilon$,  $\Upsilon_1$ and $\Upsilon_2$. The following corollary \ref{36} are immediate consequences of \ref{31} and \ref{rem2.1}. 
\begin{corollary}\label{36}
Let $T=e_1 T_1 + e_2 T_2 \in L_1^{n} \times_{e} L_1^{n}$ , then  $\lambda \in \C_1$ is an eigenvalue of $T$ $\IFF$ $\lambda \in \Upsilon_1 \cup \Upsilon_2$.
\end{corollary}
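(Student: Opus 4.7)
The statement is an immediate consequence of two results already established, namely Corollary \ref{31} and Corollary \ref{rem2.1}, so the plan is to assemble these into a short chain of equivalences. There is no serious obstacle to clear; the whole point of introducing those two corollaries just above was to package the singularity test for $T-\lambda I$ and the eigenvalue-singularity correspondence separately, so that this corollary becomes a one-line transitivity argument.

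Concretely, I would fix $\lambda\in\C_1$ and start from the assumption that $\lambda$ is an eigenvalue of $T=e_1T_1+e_2T_2$. By Corollary \ref{rem2.1}, this is equivalent to $(T-\lambda I)$ being singular in $L_1^{n}\times_e L_1^{n}$. Then Corollary \ref{31} (which is the specialization of Theorem \ref{210} to $\kappa=\lambda e_1+\lambda e_2$) gives that $(T-\lambda I)$ is singular if and only if either $(T_1-\lambda I)$ is singular or $(T_2-\lambda I)$ is singular. Applying the classical complex-linear version of the eigenvalue-singularity correspondence to $T_1$ and $T_2$ individually, the latter is equivalent to $\lambda$ being an eigenvalue of $T_1$ or an eigenvalue of $T_2$, i.e.\ $\lambda\in \Upsilon_1\cup\Upsilon_2$.

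Chaining the three equivalences yields $\lambda$ is an eigenvalue of $T\iff \lambda\in\Upsilon_1\cup\Upsilon_2$, as desired. The only thing worth double-checking in the write-up is that the identity operator $I$ on $\C_2^n$ decomposes as $e_1 I+e_2 I$ (which is the content of Definition \ref{def 2}) so that the substitution $\kappa=\lambda e_1+\lambda e_2$ really does send $T-\kappa I$ to $e_1(T_1-\lambda I)+e_2(T_2-\lambda I)$; after that observation the proof is just quoting the two corollaries in sequence.
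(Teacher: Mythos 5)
Your proof is correct and follows exactly the route the paper intends: the paper states this corollary as an immediate consequence of Corollary \ref{31} and Corollary \ref{rem2.1}, and your chain of equivalences (eigenvalue of $T$ $\Leftrightarrow$ $T-\lambda I$ singular $\Leftrightarrow$ $T_1-\lambda I$ or $T_2-\lambda I$ singular $\Leftrightarrow$ $\lambda\in\Upsilon_1\cup\Upsilon_2$) is precisely that argument, spelled out. No gaps; your remark about $I=e_1I+e_2I$ justifying the substitution $\kappa=\lambda e_1+\lambda e_2$ is the right detail to check.
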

The following corollary 3.7 are immediate consequences of \ref{210} and \ref{29}.
\begin{corollary}\label{37}
	Let $T=e_1 T_1 + e_2 T_2 \in L_1^{n} \times_{e} L_1^{n}$, then  $\kappa =\kappa_1e_1 +\kappa_2 e_2 \in \C_2$ is a modified eigenvalue of $T$ $\IFF$ either $\kappa_1 \in \Upsilon_1 $ or $\kappa_2 \in \Upsilon_2 $.
  \end{corollary}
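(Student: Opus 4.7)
The plan is to chain together the equivalences that have already been established in the previous two sections, so that the result drops out with no new work. The statement concerns a single bicomplex scalar $\kappa = \kappa_1 e_1 + \kappa_2 e_2$ and asks when it belongs to $\Upsilon$, the set of modified eigenvalues of $T = e_1T_1 + e_2T_2$.

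First, I would apply Theorem \ref{29}, which says that $\kappa$ is a modified eigenvalue of $T$ if and only if $T - \kappa I$ is singular. This replaces the spectral condition by a singularity condition on the operator $T - \kappa I \in L_1^n \times_e L_1^n$. Next, I would invoke Theorem \ref{210}, which gives the idempotent decomposition of singularity: $T - \kappa I$ is singular if and only if $T_1 - \kappa_1 I$ is singular or $T_2 - \kappa_2 I$ is singular. Finally, the classical characterization of eigenvalues for ordinary $\C_1$-linear operators (or equivalently, Corollary \ref{rem2.1} applied component-wise to $T_1$ and $T_2$ each viewed as a $\C_1$-linear operator) shows that $T_j - \kappa_j I$ is singular if and only if $\kappa_j$ is an eigenvalue of $T_j$, that is $\kappa_j \in \Upsilon_j$. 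Threading these three equivalences gives precisely
\[
\kappa \in \Upsilon \iff \kappa_1 \in \Upsilon_1 \ \text{or}\ \kappa_2 \in \Upsilon_2,
\]
which is the claim.

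Since every step is a reference to a previously proved equivalence, there is essentially no obstacle; the only mild care needed is in the middle step, where one must be sure to carry the ``or'' through correctly (rather than ``and''), and to note that the decomposition $\kappa = \kappa_1 e_1 + \kappa_2 e_2$ is the unique idempotent one, so the $\kappa_1, \kappa_2$ appearing in the singularity criterion of Theorem \ref{210} are indeed the same components. The proof can therefore be presented in three or four lines of iff-arrows, mirroring the presentation used for Corollary \ref{36}.
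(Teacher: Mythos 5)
Your proposal is correct and follows exactly the route the paper takes: the paper states that Corollary \ref{37} is an immediate consequence of Theorems \ref{29} and \ref{210}, which is precisely the chain of equivalences you describe (together with the standard fact that $T_j-\kappa_j I$ is singular iff $\kappa_j\in\Upsilon_j$). No further comment is needed.
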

  \begin{theorem} \label{th8}
  	Let $T=e_1 T_1 + e_2 T_2 \in L_1^{n} \times_{e} L_1^{n}$, then a modified eigenvalue of $T=T_1e_1 + T_2 e_2$ exists $\IFF$ an eigenvalue of $T$ exists.
  \end{theorem}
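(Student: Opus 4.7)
The statement has two directions, both of which should follow cleanly from the corollaries established just above (Corollary~\ref{36} and Corollary~\ref{37}) together with the Remark that every eigenvalue of $T$ is automatically a modified eigenvalue of $T$. So the plan is to unpack each direction by chaining these three facts.

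For the easy direction, suppose $T$ admits an eigenvalue $\lambda \in \C_1$. Then by the Remark following Definition~\ref{def1 Ev}, $\lambda$ is automatically a modified eigenvalue of $T$ (viewed as the bicomplex scalar $\lambda e_1 + \lambda e_2$), so a modified eigenvalue exists. This direction requires no computation beyond citing the definition.

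For the other direction, suppose a modified eigenvalue $\kappa = \kappa_1 e_1 + \kappa_2 e_2 \in \C_2$ of $T$ exists. I would first invoke Corollary~\ref{37}, which tells us that this is equivalent to $\kappa_1 \in \Upsilon_1$ or $\kappa_2 \in \Upsilon_2$. In either case, one of $\kappa_1,\kappa_2$ lies in $\Upsilon_1 \cup \Upsilon_2$. Then I would apply Corollary~\ref{36}, which gives that any complex number lying in $\Upsilon_1 \cup \Upsilon_2$ is an eigenvalue of $T$. Hence $T$ has an eigenvalue.

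There is essentially no obstacle here; the work was already done in Corollaries~\ref{36} and~\ref{37}. The only thing to be careful about is not to confuse the two notions of ``existence''---in the forward direction we need an element of $\C_1$ serving as an eigenvalue, and Corollary~\ref{36} is precisely the tool that converts membership in $\Upsilon_1 \cup \Upsilon_2$ (which is a priori a statement about $T_1$ or $T_2$) into an eigenvalue statement about $T$ itself.
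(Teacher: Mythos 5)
Your proposal is correct and follows essentially the same route as the paper's own proof: the forward direction chains Corollary~\ref{37} (giving $\kappa_1\in\Upsilon_1$ or $\kappa_2\in\Upsilon_2$) into Corollary~\ref{36} (membership in $\Upsilon_1\cup\Upsilon_2$ yields an eigenvalue of $T$), and the converse is the trivial observation that every eigenvalue is a modified eigenvalue. No substantive differences.
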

  \begin{proof}
  	Suppose that a modified eigenvalue of $T$ exists. We use \ref{36} and \ref{37}.
  	\begin{eqnarray*}
  		&\Rightarrow& \mbox{there exists} \ \kappa =\kappa_1 e_1 +\kappa_2 e_2 \in \Upsilon \ \mbox{such that} \ \mbox{either} \quad \kappa_1 \in \Upsilon_1  \quad \mbox{or} \quad \kappa_2 \in \Upsilon_2.\\
  		&\Rightarrow& \kappa_1 \in \Upsilon_1 \cup \Upsilon_2 \ \mbox{or} \ \kappa_2 \in \Upsilon_1 \cup \Upsilon_2 \\
  	&\Rightarrow& \mbox{either} \quad \kappa_1  \quad \mbox{is an eigenvalue of} \quad T
  		\quad \mbox{or} \quad \kappa_2  \quad \mbox{is an eigenvalue of} \quad T\\
  		&\Rightarrow&\mbox{Eigenvalue of} \ T \ \mbox{exists}.
  		\end{eqnarray*}
  		The proof of the converse part is very trivial as every eigenvalue of $T$ is a modified eigenvalue of $T$.\\
 Thus the theorem is proved.
  \end{proof}
  The following theorem shows the existence of an infinite number of modified eigenvalues of $T=e_1 T_1 +e_2 T_2$.
\begin{theorem} \label{th3.1}
	Let $T=e_1 T_1 + e_2 T_2 \in L_1^{n} \times_{e} L_1^{n}$ and $\kappa_1 \in \C_1$ be an eigenvalue of $T_1$, then $ \kappa_1e_1 + w e_2$ is a modified eigenvalue of $T$, for all $w \in \C_1$.
\end{theorem}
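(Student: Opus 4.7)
The plan is to prove the statement by direct appeal to Corollary \ref{37}, and also to note that an explicit modified eigenvector can be written down, which makes the role of the idempotents transparent.

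First I would observe that the hypothesis $\kappa_1 \in \C_1$ is an eigenvalue of $T_1$ is exactly the statement $\kappa_1 \in \Upsilon_1$. Corollary \ref{37} then says that a bicomplex number of the form $\kappa_1 e_1 + \kappa_2 e_2$ is a modified eigenvalue of $T$ if and only if either $\kappa_1 \in \Upsilon_1$ or $\kappa_2 \in \Upsilon_2$. Setting $\kappa_2 = w$ for an arbitrary $w \in \C_1$, the first disjunct is already satisfied, so Corollary \ref{37} immediately gives that $\kappa_1 e_1 + w e_2 \in \Upsilon$, which is the conclusion.

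To make the argument self-contained (and to exhibit the modified eigenvector explicitly), I would also include the following direct verification. Let $v = (v_1, v_2, \ldots, v_n) \in \C_1^n$ be a non-zero eigenvector of $T_1$ corresponding to $\kappa_1$, so $T_1(v) = \kappa_1 v$. Consider the non-zero vector $\xi = e_1 v + e_2 \cdot 0 \in \C_2^n$, which is non-zero by Remark \ref{rem 1.1}. Using Definition \ref{defid} we get
\begin{eqnarray*}
T(\xi) &=& e_1 T_1(v) + e_2 T_2(0) \;=\; e_1 \kappa_1 v \;=\; \kappa_1 e_1 v.
\end{eqnarray*}
On the other hand, since $e_1 e_2 = 0$ and $e_1^2 = e_1$, for any $w \in \C_1$ we have
\begin{eqnarray*}
(\kappa_1 e_1 + w e_2)\,\xi &=& (\kappa_1 e_1 + w e_2)(e_1 v) \;=\; \kappa_1 e_1 v + w (e_2 e_1) v \;=\; \kappa_1 e_1 v.
\end{eqnarray*}
Thus $T(\xi) = (\kappa_1 e_1 + w e_2)\xi$ with $\xi \neq 0$, so $\kappa_1 e_1 + w e_2$ is a modified eigenvalue of $T$ by Definition \ref{def1 Ev}.

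There is essentially no obstacle here; the content of the theorem is the orthogonality relation $e_1 e_2 = 0$, which forces the second idempotent component of any modified eigenvalue to be irrelevant once its action is restricted to vectors lying in the ideal $I_1$. The only mild point to watch is that one must invoke Remark \ref{rem 1.1} to certify $\xi = e_1 v \neq 0$, since otherwise the definition of modified eigenvector would not apply.
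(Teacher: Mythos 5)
Your proposal is correct, and your second, ``self-contained'' argument is essentially identical to the paper's own proof: the authors also take a non-zero eigenvector $v$ of $T_1$, form the singular vector $e_1 v + e_2\cdot 0$ (non-zero by \ref{rem 1.1}), and verify $T(e_1 v) = (\kappa_1 e_1 + w e_2)(e_1 v)$ for arbitrary $w$. Your first argument, deducing the theorem in one line from Corollary \ref{37}, is a genuinely shorter route that the paper does not take but could have: Corollary \ref{37} is stated and justified (via \ref{210} and \ref{29}) before Theorem \ref{th3.1}, so there is no circularity, and $\kappa_1 \in \Upsilon_1$ already satisfies the disjunction regardless of the second idempotent component. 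What the explicit construction buys over the appeal to Corollary \ref{37} is the concrete modified eigenvector $e_1 v$ and the observation that the mechanism is precisely $e_1 e_2 = 0$; what the corollary buys is brevity and the realization that Theorem \ref{th3.1} (and its dual, Corollary \ref{th4}) are really just special cases of the characterization $\Upsilon = (\Upsilon_1 \times_e \C_1)\cup(\C_1 \times_e \Upsilon_2)$ later recorded as Theorem \ref{th7}. Both of your arguments are sound.
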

\begin{proof}
    Let us suppose that $\kappa_1\in \C_1$ be an eigenvalue of $T_1$. This implies that there exists a non-zero vector $(v_1, v_2,\ldots ,v_n) \in \C_1^n$ such that $T_1(v_1, v_2,\ldots, v_n) = \kappa_1(v_1, v_2,\ldots ,v_n)$. By \ref{rem 1.1}, $0 \neq e_1(v_1, v_2,\ldots, v_n)  +   e_2(0, 0,\ldots, 0) \in \C_2^n$.\\
    Now,
    \begin{eqnarray*}
   T\left[ e_1(v_1, v_2,\ldots, v_n)  +   e_2(0, 0,\ldots, 0)\right] &=& (e_1T_1 +T_2e_2)\left[ e_1(v_1, v_2,\ldots, v_n)  +   e_2(0, 0,\ldots, 0)\right]\\
    &=& e_1T_1(v_1, v_2,\ldots, v_n) +e_2 T_2(0, 0,\ldots, 0)\\
    &=& e_1\left[ \kappa_1(v_1, v_2,\ldots ,v_n)\right] +e_2 \left[w(0, 0,\ldots, 0)\right]; \quad \forall w\in \C_1\\
    &=& (\kappa_1e_1 + w e_2)\left[ e_1(v_1, v_2,\ldots, v_n)  +   e_2(0, 0,\ldots, 0)\right].
    \end{eqnarray*}
     Therefore $(\kappa_1e_1 + w e_2)$ is modified eigenvalue of $T$.\\
  Thus the theorem is proved.   
\end{proof}
The following result exhibits analogous to the concept of the theorem \ref{th3.1}. We can deduce the same result for the $T_2$ linear operator.
\begin{corollary} \label{th4} 
		Let $T=e_1 T_1 + e_2 T_2 \in L_1^{n} \times_{e} L_1^{n}$ and $\kappa_2 \in \C_1$ be an eigenvalue of $T_2$, then $ ze_1 + \kappa_2 e_2$ is a modified eigenvalue of $T$, for all $z \in \C_1$.
  \end{corollary}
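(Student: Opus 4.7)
The statement is the exact dual of Theorem \ref{th3.1}: the role of $T_1$ is swapped with $T_2$, and the freedom parameter moves from the second idempotent component to the first. Accordingly, my plan is to mirror the construction used in the proof of Theorem \ref{th3.1}, but build the witness bicomplex vector out of the eigenvector of $T_2$ placed in the $e_2$-slot and the zero vector placed in the $e_1$-slot.

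First, starting from the hypothesis that $\kappa_2\in\C_1$ is an eigenvalue of $T_2$, I would fix a nonzero $(w_1,w_2,\ldots,w_n)\in\C_1^n$ with $T_2(w_1,w_2,\ldots,w_n)=\kappa_2(w_1,w_2,\ldots,w_n)$. Next I would assemble the candidate modified eigenvector
\[
\xi \;=\; e_1(0,0,\ldots,0)\;+\;e_2(w_1,w_2,\ldots,w_n)\ \in\ \C_2^n,
\]
and verify $\xi\neq 0$ by appealing to Remark \ref{rem 1.1}, which guarantees nonvanishing as soon as one of the two idempotent components is nonzero — here the $e_2$-component $(w_1,\ldots,w_n)$ is nonzero by choice.

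Then I would apply $T=e_1T_1+e_2T_2$ to $\xi$ using Definition \ref{defid}. Since $T_1$ is linear, $T_1(0,0,\ldots,0)=(0,0,\ldots,0)$, so for any $z\in\C_1$ we may freely write $T_1(0,\ldots,0)=z\cdot(0,\ldots,0)$. The computation then proceeds
\begin{align*}
T\xi &= e_1\,T_1(0,\ldots,0)\;+\;e_2\,T_2(w_1,\ldots,w_n)\\
&= e_1\bigl[z\cdot(0,\ldots,0)\bigr]\;+\;e_2\bigl[\kappa_2(w_1,\ldots,w_n)\bigr]\\
&= (ze_1+\kappa_2 e_2)\bigl[e_1(0,\ldots,0)+e_2(w_1,\ldots,w_n)\bigr]\\
&= (ze_1+\kappa_2 e_2)\,\xi,
\end{align*}
which exhibits $\xi$ as a modified eigenvector of $T$ for the modified eigenvalue $ze_1+\kappa_2 e_2$, and since $z\in\C_1$ was arbitrary, the result follows.

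There is essentially no obstacle here: the argument is a symmetric mimic of Theorem \ref{th3.1}, and the only subtlety worth flagging in the write-up is the use of the trivial identity $T_1(0,\ldots,0)=z\cdot(0,\ldots,0)$ for \emph{every} $z\in\C_1$, which is what produces a whole one-parameter family of modified eigenvalues from a single eigenvector of $T_2$, thereby (together with Theorem \ref{th3.1}) also foreshadowing the strict containment $\Upsilon_1\times_e\Upsilon_2\subsetneq\Upsilon$ announced in \eqref{11}.
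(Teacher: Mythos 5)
Your proof is correct and follows exactly the route the paper intends: the paper omits a written proof for this corollary, stating only that it is deduced "for the $T_2$ linear operator" by mirroring Theorem \ref{th3.1}, and your argument is precisely that mirror image, with the witness vector $e_1(0,\ldots,0)+e_2(w_1,\ldots,w_n)$ and the observation that $T_1(0,\ldots,0)=z\cdot(0,\ldots,0)$ for every $z\in\C_1$. No gaps; nothing further is needed.
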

 \begin{theorem}\label{th7} 
The set $\Upsilon$ is equal to the set $\Upsilon = (\Upsilon_1 \times_e \C_1) \cup (\C_1 \times_e \Upsilon_2)$, that is
     \begin{center}
     	 $\Upsilon = (\Upsilon_1 \times_e \C_1) \cup (\C_1 \times_e \Upsilon_2)$.
     \end{center}
    \end{theorem}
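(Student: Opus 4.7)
The plan is to prove the set equality by double inclusion, using the characterization of modified eigenvalues already established in Corollary \ref{37} together with the two existence results Theorem \ref{th3.1} and Corollary \ref{th4}. There are essentially no computations to do: both directions reduce to unpacking the definitions of $\Upsilon_1 \times_e \C_1$ and $\C_1 \times_e \Upsilon_2$ and applying prior results.

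For the inclusion $\Upsilon \subseteq (\Upsilon_1 \times_e \C_1) \cup (\C_1 \times_e \Upsilon_2)$, I would take an arbitrary $\kappa = \kappa_1 e_1 + \kappa_2 e_2 \in \Upsilon$. By Corollary \ref{37}, either $\kappa_1 \in \Upsilon_1$ or $\kappa_2 \in \Upsilon_2$. In the first case, since $\kappa_2 \in \C_1$, we have $\kappa \in \Upsilon_1 \times_e \C_1$ directly from the definition of $\times_e$. In the second case, since $\kappa_1 \in \C_1$, we have $\kappa \in \C_1 \times_e \Upsilon_2$. Either way, $\kappa$ lies in the union.

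For the reverse inclusion $(\Upsilon_1 \times_e \C_1) \cup (\C_1 \times_e \Upsilon_2) \subseteq \Upsilon$, I would handle the two pieces separately. If $\kappa \in \Upsilon_1 \times_e \C_1$, write $\kappa = \kappa_1 e_1 + w e_2$ with $\kappa_1 \in \Upsilon_1$ and $w \in \C_1$; then Theorem \ref{th3.1} says precisely that $\kappa_1 e_1 + w e_2$ is a modified eigenvalue of $T$, so $\kappa \in \Upsilon$. If instead $\kappa \in \C_1 \times_e \Upsilon_2$, the symmetric statement Corollary \ref{th4} gives $\kappa \in \Upsilon$. Hence the union is contained in $\Upsilon$, completing the proof.

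There is no real obstacle here; the theorem is essentially a repackaging of Corollary \ref{37}, Theorem \ref{th3.1}, and Corollary \ref{th4} into a single set-theoretic identity. The only thing to be careful about is making sure the two factors in each idempotent product are drawn from the correct sets (e.g.\ $\Upsilon_1 \times_e \C_1$ allows an \emph{arbitrary} second component in $\C_1$, which is exactly what Theorem \ref{th3.1} provides).
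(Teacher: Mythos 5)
Your proof is correct and follows essentially the same route as the paper, whose argument is a single biconditional chain built on Corollary \ref{37}. Your only cosmetic difference is that you establish the reverse inclusion via Theorem \ref{th3.1} and Corollary \ref{th4} rather than by reading Corollary \ref{37} in the other direction, but these amount to the same fact.
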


\begin{proof}
    Let us suppose that $\kappa= \kappa_1 e_1 + \kappa_2 e_2 \in \Upsilon$ . 
    \begin{eqnarray*}
    	&\Rightarrow& \kappa \ \mbox{is a modified eigenvalue of} \ T=e_1T_1 +e_2 T_2.\\
    	&& \mbox{By \ref{37}, we have}\\
        &\Leftrightarrow& \mbox{either} \ \kappa_1 \in \Upsilon_1 \ \mbox{or} \ \kappa_2 \in \Upsilon_2 \\
         &\Leftrightarrow&   \mbox{either} \ \kappa= \kappa_1 e_1 + \kappa_2 e_2 \in (\Upsilon_1 \times_e \C_1) \ \mbox{or} \ \kappa= \kappa_1 e_1 + \kappa_2 \in (\C_1 \times_e \Upsilon_2) \\
         &\Leftrightarrow&    \kappa= \kappa_1 e_1 + \kappa_2 e_2 \in (\Upsilon_1 \times_e \C_1) \cup (\C_1 \times_e \Upsilon_2).
    \end{eqnarray*}
   Thus the theorem is proved.    
\end{proof}
\begin{remark}
It is evident that $\Upsilon_1 \times_e \Upsilon_2 \subsetneq (\Upsilon_1 \times_e \C_1) \cup (\C_1 \times_e \Upsilon_2)$, that is for arbitrary $\Upsilon$,  $\Upsilon_1$ and $\Upsilon_2$, $\Upsilon_1 \times_e \Upsilon_2 \subsetneq \Upsilon.$
\end{remark}
\begin{definition}(Eigenspace and Modified eigenspace)\label{313} :
	Let $T= e_1 T_1 + e_2 T_2 \in L_1^{n} \times_e  L_1^{n} $ be a linear operator on $\C_2^n(\C_1)$, and $\lambda$ and $\kappa= \kappa_1 e_1 + \kappa_2 e_2$ be the eigenvalue and modified eigenvalue of $T$.
	The eigenspace and modified eigenspace of $T$ associated with the eigenvalue $\lambda$ and modified eigenvalue $\kappa$ are denoted by $^T(E)_\lambda$  and $^T(ME)_{\kappa}$, respectively and defined as
	\begin{eqnarray*}
	^T(E)_\lambda &=& \left\lbrace (\xi_{1},\xi_{2},\ldots,\xi_{n}) \in \C_2^n:T (\xi_{1},\xi_{2},\ldots,\xi_{n}) = \lambda 
	(\xi_{1},\xi_{2},\ldots,\xi_{n})\right\rbrace.\\
	^T(ME)_{\kappa} &=& \left\lbrace (\eta_{1},\eta_{2},\ldots,\eta_{n}) \in \C_2^n:T (\eta_{1},\eta_{2},\ldots,\eta_{n}) = \kappa 
	(\eta_{1},\eta_{2},\ldots,\eta_{n})\right\rbrace.
\end{eqnarray*}
It is a trivial matter to check that the modified eigenspace $^T(ME)_{\kappa}$ of $T$ associated with the modified eigenvalue $\kappa$ forms a subspace of $\C_2^n$. The eigenspace of $T_1$ and $T_2$ associated with the eigenvalue $z$  and $w$ are denoted by $^{T_1}(E)_z$ and $^{T_2}(E)_w$, respectively.
\end{definition}
The following theorems \ref{314}, \ref{316} and \ref{318} are immediate consequence of \ref{37} and \ref{313}.
\begin{theorem}\label{314}
	Let $T= e_1 T_1 + e_2 T_2 \in L_1^{n} \times_e  L_1^{n} $ be a linear operator on $\C_2^n(\C_1)$ and $\kappa= \kappa_1 e_1 + \kappa_2 e_2$ be the modified eigenvalue of $T$ such that $\kappa_1 \in \Upsilon_1 $ and $\kappa_2 \notin \Upsilon_2 $, then
	\begin{center}
		$^T(ME)_{\kappa}={}^{T_1}(E)_{\kappa_1} \times_e \left\lbrace 0\right\rbrace $
	\end{center}
\end{theorem}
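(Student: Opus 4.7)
The plan is to unfold both sides via the idempotent decomposition of vectors in $\C_2^n$ and reduce the single modified eigenvalue equation $T(\xi) = \kappa\xi$ to a pair of ordinary eigenvalue equations for $T_1$ and $T_2$, then exploit the asymmetry $\kappa_1 \in \Upsilon_1$ versus $\kappa_2 \notin \Upsilon_2$ to force the $e_2$-component of any modified eigenvector to be zero.

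For the inclusion $^T(ME)_{\kappa} \subseteq {}^{T_1}(E)_{\kappa_1} \times_e \{0\}$, I would take an arbitrary $\xi = (\xi_1,\ldots,\xi_n) \in {}^T(ME)_{\kappa}$, write $\xi = e_1 \xi^- + e_2 \xi^+$ with $\xi^- = (\xi_1^-,\ldots,\xi_n^-), \xi^+ = (\xi_1^+,\ldots,\xi_n^+) \in \C_1^n$, and compute both sides of $T(\xi) = \kappa\xi$. Using the defining rule for $T$ in Definition \ref{defid} and the fact that $e_1 e_2 = 0$, $e_i^2 = e_i$, one obtains
\[
 e_1 T_1(\xi^-) + e_2 T_2(\xi^+) \;=\; e_1 \kappa_1 \xi^- + e_2 \kappa_2 \xi^+.
\]
By uniqueness of the idempotent decomposition in $\C_2^n$, this splits into $T_1(\xi^-) = \kappa_1 \xi^-$ and $T_2(\xi^+) = \kappa_2 \xi^+$. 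The hypothesis $\kappa_2 \notin \Upsilon_2$ means $\kappa_2$ is not an eigenvalue of $T_2$, so by Corollary \ref{rem2.1} the operator $T_2 - \kappa_2 I$ is non-singular, which forces $\xi^+ = 0$. The remaining relation $T_1(\xi^-) = \kappa_1 \xi^-$ places $\xi^-$ in $^{T_1}(E)_{\kappa_1}$ (note $\xi^-$ is allowed to be zero here, consistent with the subspace structure), so $\xi \in {}^{T_1}(E)_{\kappa_1} \times_e \{0\}$.

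The reverse inclusion is a direct verification: for any $v \in {}^{T_1}(E)_{\kappa_1}$, set $\xi = e_1 v + e_2 \cdot 0$; then $T(\xi) = e_1 T_1(v) + e_2 T_2(0) = e_1 \kappa_1 v + 0 = e_1 \kappa_1 v + e_2 \kappa_2 \cdot 0 = \kappa \xi$, so $\xi \in {}^T(ME)_{\kappa}$.

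The main obstacle is essentially bookkeeping: one must be careful that the set $^{T_1}(E)_{\kappa_1} \times_e \{0\}$ as written uses the idempotent-product notation from equation \eqref{idem.def}, i.e. $\{e_1 v + e_2 \cdot 0 : v \in {}^{T_1}(E)_{\kappa_1}\}$, so that the zero vector of $\C_2^n$ is automatically included (corresponding to $v = 0$). Aside from this, the only non-trivial step is invoking non-singularity of $T_2 - \kappa_2 I$ to kill the $e_2$-component, which is exactly the role of the hypothesis $\kappa_2 \notin \Upsilon_2$.
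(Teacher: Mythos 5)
Your proof is correct and follows essentially the same route as the paper: decompose the modified eigenvalue equation $T(\xi)=\kappa\xi$ into its idempotent components, use $\kappa_2 \notin \Upsilon_2$ to force the $e_2$-component to vanish, and identify the $e_1$-component with ${}^{T_1}(E)_{\kappa_1}$ (the paper runs both inclusions at once as a chain of equivalences, whereas you separate them, but the content is identical). The only quibble is that the non-singularity of $T_2-\kappa_2 I$ is plain finite-dimensional linear algebra over $\C_1$ rather than an instance of Corollary \ref{rem2.1}, which concerns the bicomplex operator $T$; the fact you need is immediate either way.
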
 
\begin{proof} 
Let us Suppose that $(\eta_{1},\eta_{2},\ldots,\eta_{n}) \in {}^T(ME)_{\kappa}$.
\begin{eqnarray*}
	\Leftrightarrow T (\eta_{1},\eta_{2},\ldots,\eta_{n}) &=& \kappa 
	(\eta_{1},\eta_{2},\ldots,\eta_{n})\\
	\Leftrightarrow (e_1T_1 + e_2 T_2)(\eta_{1},\eta_{2},\ldots,\eta_{n}) &=& (\kappa_1 e_1 + \kappa_2 e_2) 
	(\eta_{1},\eta_{2},\ldots,\eta_{n})\\
	\Leftrightarrow e_{1} {T}_{1} (\eta_{1}^-, \eta_{2}^-,\ldots,\eta_{n}^-)
	+ e_{2} {T}_{2} (\eta_{1}^+, \eta_{2}^+,\ldots,\eta_{n}^+) &=&  e_1 \kappa_1 (\eta_{1}^-, \eta_{2}^-,\ldots,\eta_{n}^-) + e_2 \kappa_2 (\eta_{1}^+, \eta_{2}^+,\ldots,\eta_{n}^+)\\
	\Leftrightarrow {T}_{1} (\eta_{1}^-, \eta_{2}^-,\ldots,\eta_{n}^-)
	= \kappa_1 (\eta_{1}^-, \eta_{2}^-,\ldots,\eta_{n}^-) &\mbox{and}& {T}_{2} (\eta_{1}^+, \eta_{2}^+,\ldots,\eta_{n}^+)=\kappa_2 (\eta_{1}^+, \eta_{2}^+,\ldots,\eta_{n}^+)\\
	\Leftrightarrow (\eta_{1}^-, \eta_{2}^-,\ldots,\eta_{n}^-) \in {}^{T_1}(E)_{\kappa_1} &\mbox{and}& (\eta_{1}^+, \eta_{2}^+,\ldots,\eta_{n}^+) = 0, \ \mbox{as} \ \kappa_2 \notin \Upsilon_2  \\
	\Leftrightarrow (\eta_{1},\eta_{2},\ldots,\eta_{n}) &\in& {}^{T_1}(E)_{\kappa_1} \times_e \left\lbrace 0\right\rbrace
	\end{eqnarray*}
	It shows that $^T(ME)_{\kappa}={}^{T_1}(E)_{\kappa_1} \times_e \left\lbrace 0\right\rbrace $.\\
Thus the theorem is proved.
\end{proof}
The following corollary \ref{315} is immediate consequence of \ref{314}.
\begin{corollary}\label{315}
	Let $T= e_1 T_1 + e_2 T_2 \in L_1^{n} \times_e  L_1^{n} $ be a linear operator on $\C_2^n(\C_1)$ and $\kappa= \kappa_1 e_1 + \kappa_2 e_2$ be the modified eigenvalue of $T$ such that $\kappa_1 \in \Upsilon_1 $ and $\kappa_2 \notin \Upsilon_2 $, then all modified eigenvector of $T$ associated with the modified eigenvalue $\kappa$ are singular, that is it will be a multiple of $e_1$ and
		\begin{center}
		$^T(ME)_{\kappa}=e_1{}^{T_1}(E)_{\kappa_1}$.
	\end{center}
\end{corollary}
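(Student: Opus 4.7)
The plan is to invoke Theorem \ref{314} directly and then repackage its conclusion into the more compact form required here. Under the hypothesis $\kappa_1\in\Upsilon_1$ and $\kappa_2\notin\Upsilon_2$, that theorem already supplies the identification ${}^T(ME)_{\kappa}={}^{T_1}(E)_{\kappa_1}\times_e\{0\}$, so everything that remains is bookkeeping: rewriting this Cartesian idempotent product in closed form and reading off the singularity assertion for individual eigenvectors.

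The key step is to unfold the definition of the idempotent product from \eqref{idem.def}: for an arbitrary $S\subseteq \C_1^n$ one has $S\times_e\{0\}=\{e_1 v+e_2\cdot 0:v\in S\}=\{e_1 v:v\in S\}$, which we abbreviate as $e_1 S$. Applied with $S={}^{T_1}(E)_{\kappa_1}$ and substituted into the formula of Theorem \ref{314}, this immediately yields ${}^T(ME)_{\kappa}=e_1\,{}^{T_1}(E)_{\kappa_1}$, which is precisely the displayed equality of the corollary.

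The singularity claim then drops out by inspection: an arbitrary element of $e_1\,{}^{T_1}(E)_{\kappa_1}$ has the coordinatewise form $(e_1 v_1,\ldots,e_1 v_n)$ with each entry $e_1 v_i\in I_1\subseteq O_2$, so by Theorem \ref{price.1} every coordinate is a singular scalar of $\C_2$, and hence by Definition \ref{rem 1} the whole tuple is a singular vector of $\C_2^n$, i.e.\ a ``multiple of $e_1$'' in the sense of the corollary's statement. There is essentially no obstacle here: the corollary is a notational reformulation of Theorem \ref{314}, and the only care required is to keep straight the two uses of ``singular'' in this paper, namely singularity of a scalar in $\C_2$ (Theorem \ref{price.1}) versus singularity of a vector in $\C_2^n$ (Definition \ref{rem 1}), and to cite the right one at each step.
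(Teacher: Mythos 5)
Your proposal is correct and follows exactly the route the paper intends: the paper gives no separate argument for Corollary \ref{315}, stating only that it is an immediate consequence of Theorem \ref{314}, and your unfolding of ${}^{T_1}(E)_{\kappa_1}\times_e\{0\}$ into $e_1\,{}^{T_1}(E)_{\kappa_1}$ together with the coordinatewise singularity check via Theorem \ref{price.1} and Definition \ref{rem 1} is precisely the bookkeeping that makes that implication explicit.
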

\begin{theorem}\label{316}
	Let $T= e_1 T_1 + e_2 T_2 \in L_1^{n} \times_e  L_1^{n} $ be a linear operator on $\C_2^n(\C_1)$ and $\kappa= \kappa_1 e_1 + \kappa_2 e_2$ be the modified eigenvalue of $T$ such that $\kappa_1 \notin \Upsilon_1 $ and $\kappa_2 \in \Upsilon_2 $, then
	\begin{center}
		$^T(ME)_{\kappa}=\left\lbrace 0\right\rbrace \times_e {}^{T_2}(E)_{\kappa_2} $
	\end{center}
\end{theorem}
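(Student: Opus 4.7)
The plan is to mirror the proof of Theorem \ref{314} verbatim, with the roles of $T_1$ and $T_2$ (and correspondingly $\kappa_1$ and $\kappa_2$) interchanged. The statement is the exact dual of the previous theorem, and the hypotheses $\kappa_1 \notin \Upsilon_1$, $\kappa_2 \in \Upsilon_2$ force the $e_1$-component of any modified eigenvector to vanish, leaving only the $e_2$-component as a genuine eigenvector of $T_2$.

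Concretely, I would start with an arbitrary $(\eta_1, \eta_2, \ldots, \eta_n) \in {}^T(ME)_\kappa$ and expand the defining relation $T(\eta_1,\ldots,\eta_n) = \kappa(\eta_1,\ldots,\eta_n)$ using Definition \ref{defid} and the idempotent form of $\kappa$. Comparing the $e_1$- and $e_2$-parts (which is valid because the idempotent decomposition is unique) reduces the equation to the pair
\begin{eqnarray*}
T_1(\eta_1^-, \ldots, \eta_n^-) &=& \kappa_1(\eta_1^-, \ldots, \eta_n^-),\\
T_2(\eta_1^+, \ldots, \eta_n^+) &=& \kappa_2(\eta_1^+, \ldots, \eta_n^+).
\end{eqnarray*}
Now the hypothesis $\kappa_1 \notin \Upsilon_1$ means $\kappa_1$ is not an eigenvalue of $T_1$, so the first equation forces $(\eta_1^-, \ldots, \eta_n^-) = 0$; otherwise we would contradict the definition of $\Upsilon_1$. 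The second equation then says exactly that $(\eta_1^+, \ldots, \eta_n^+) \in {}^{T_2}(E)_{\kappa_2}$. Translating back via the idempotent representation gives $(\eta_1,\ldots,\eta_n) \in \{0\} \times_e {}^{T_2}(E)_{\kappa_2}$.

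For the reverse inclusion, I would take any $(\eta_1,\ldots,\eta_n) = e_1 \cdot 0 + e_2 (w_1,\ldots,w_n)$ with $(w_1,\ldots,w_n) \in {}^{T_2}(E)_{\kappa_2}$ and compute directly that $T(\eta_1,\ldots,\eta_n) = e_2 T_2(w_1,\ldots,w_n) = e_2 \kappa_2 (w_1,\ldots,w_n) = \kappa (\eta_1,\ldots,\eta_n)$, where the last step uses $e_1 e_2 = 0$ and $e_2^2 = e_2$ to absorb the $e_1\kappa_1 \cdot 0$ term. This shows containment in ${}^T(ME)_\kappa$.

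The only subtle step is asserting that $(\eta_1^-,\ldots,\eta_n^-)$ must be zero rather than a nonzero eigenvector of $T_1$, but this is immediate from the hypothesis $\kappa_1 \notin \Upsilon_1$ together with the definition of $\Upsilon_1$ as the set of all eigenvalues of $T_1$. Once this is in place, every remaining step is formal manipulation in the idempotent decomposition, parallel to Theorem \ref{314}.
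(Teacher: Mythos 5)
Your proposal is correct and follows essentially the same route as the paper: expand $T(\eta)=\kappa\eta$ in idempotent components, use $\kappa_1\notin\Upsilon_1$ to force the $e_1$-component to vanish, and identify the $e_2$-component as an eigenvector of $T_2$. The only (welcome) difference is that you spell out the reverse inclusion explicitly, whereas the paper compresses both directions into a single chain of equivalences.
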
 
\begin{proof} 
	Let us Suppose that $(\eta_{1},\eta_{2},\ldots,\eta_{n}) \in {}^T(ME)_{\kappa}$.
	\begin{eqnarray*}
		\Leftrightarrow T (\eta_{1},\eta_{2},\ldots,\eta_{n}) &=& \kappa 
		(\eta_{1},\eta_{2},\ldots,\eta_{n})\\
		\Leftrightarrow (e_1T_1 + e_2 T_2)(\eta_{1},\eta_{2},\ldots,\eta_{n}) &=& (\kappa_1 e_1 + \kappa_2 e_2) 
		(\eta_{1},\eta_{2},\ldots,\eta_{n})\\
		\Leftrightarrow e_{1} {T}_{1} (\eta_{1}^-, \eta_{2}^-,\ldots,\eta_{n}^-)
		+ e_{2} {T}_{2} (\eta_{1}^+, \eta_{2}^+,\ldots,\eta_{n}^+) &=&  e_1 \kappa_1 (\eta_{1}^-, \eta_{2}^-,\ldots,\eta_{n}^-) + e_2 \kappa_2 (\eta_{1}^+, \eta_{2}^+,\ldots,\eta_{n}^+)\\
		\Leftrightarrow {T}_{1} (\eta_{1}^-, \eta_{2}^-,\ldots,\eta_{n}^-)
		= \kappa_1 (\eta_{1}^-, \eta_{2}^-,\ldots,\eta_{n}^-) &\mbox{and}& {T}_{2} (\eta_{1}^+, \eta_{2}^+,\ldots,\eta_{n}^+)=\kappa_2 (\eta_{1}^+, \eta_{2}^+,\ldots,\eta_{n}^+)\\
		\Leftrightarrow (\eta_{1}^-, \eta_{2}^-,\ldots,\eta_{n}^-) = 0, \ \mbox{as} \ \kappa_1 \notin \Upsilon_1 &\mbox{and}& (\eta_{1}^+, \eta_{2}^+,\ldots,\eta_{n}^+) \in {}^{T_2}(E)_{\kappa_2}   \\
		\Leftrightarrow (\eta_{1},\eta_{2},\ldots,\eta_{n}) &\in& \left\lbrace 0\right\rbrace \times_e {}^{T_2}(E)_{\kappa_2}
	\end{eqnarray*}
	It shows that $^T(ME)_{\kappa}=\left\lbrace 0\right\rbrace \times_e {}^{T_2}(E)_{\kappa_2} $.\\
	Thus the theorem is proved.
\end{proof}
The following corollary \ref{317} is immediate consequence of \ref{316}.
\begin{corollary}\label{317}
	Let $T= e_1 T_1 + e_2 T_2 \in L_1^{n} \times_e  L_1^{n} $ be a linear operator on $\C_2^n(\C_1)$ and $\kappa= \kappa_1 e_1 + \kappa_2 e_2$ be the modified eigenvalue of $T$ such that $\kappa_1 \notin \Upsilon_1 $ and $\kappa_2 \in \Upsilon_2 $, then all modified eigenvector of $T$ associated with the modified eigenvalue $\kappa$ are singular, that is it will be a multiple of $e_2$ and
		\begin{center}
		$^T(ME)_{\kappa}=e_2{}^{T_2}(E)_{\kappa_2}$.
	\end{center}
\end{corollary}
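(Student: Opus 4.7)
The plan is to derive this corollary as a direct consequence of Theorem \ref{316}, since the heavy lifting (characterizing $^T(ME)_{\kappa}$ as an idempotent product) has already been done there. So I would begin by invoking Theorem \ref{316} to write
\[
{}^T(ME)_{\kappa} \;=\; \{0\} \times_{e} {}^{T_2}(E)_{\kappa_2}.
\]
From here, the task reduces to unpacking the definition of the Cartesian idempotent product $\times_e$ of vector subsets as given in equation \eqref{idem.def}.

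Next, I would substitute the defining formula
\[
\{0\} \times_{e} {}^{T_2}(E)_{\kappa_2} \;=\; \bigl\{\,e_1 x + e_2 y : x \in \{0\},\; y \in {}^{T_2}(E)_{\kappa_2}\bigr\}.
\]
Since the only admissible choice of $x$ is the zero vector, the $e_1 x$ term vanishes identically, and the right-hand side collapses to $\{e_2 y : y \in {}^{T_2}(E)_{\kappa_2}\}$, which is exactly the notation $e_2\,{}^{T_2}(E)_{\kappa_2}$. This chain of equalities yields the claimed identity ${}^T(ME)_{\kappa} = e_2\,{}^{T_2}(E)_{\kappa_2}$.

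Finally, I would observe that every element of this set has the form $e_2 y$ with $y \in \C_1^n$, so each modified eigenvector lies componentwise in the principal ideal $I_2$ generated by $e_2$. By Theorem \ref{price.1}, every component belongs to $I_1 \cup I_2$, which by Definition \ref{rem 1} of singular vectors in $\C_2^n$ (together with the fact that no coordinate can be non-singular, since $e_2$ itself is a zero divisor) shows that every such modified eigenvector is a singular vector of $\C_2^n$.

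There is no real obstacle here; the corollary is a formal rewriting of Theorem \ref{316} using the explicit form of the idempotent product when one factor is $\{0\}$. The only minor care needed is to make the passage from the $\times_e$ notation to the single-idempotent multiplication $e_2\,{}^{T_2}(E)_{\kappa_2}$ explicit, and to spell out that membership in $I_2$ is what qualifies these eigenvectors as singular in the sense of Definition \ref{rem 1}.
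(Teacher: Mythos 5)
Your proposal is correct and follows exactly the route the paper intends: the paper states Corollary \ref{317} as an immediate consequence of Theorem \ref{316}, and you simply make that deduction explicit by unpacking $\{0\}\times_e {}^{T_2}(E)_{\kappa_2}$ into $e_2\,{}^{T_2}(E)_{\kappa_2}$ and noting that every component of such a vector lies in $I_2$ and is therefore singular. Nothing is missing.
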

\begin{theorem}\label{318}
	Let $T= e_1 T_1 + e_2 T_2 \in L_1^{n} \times_e  L_1^{n} $ be a linear operator on $\C_2^n(\C_1)$ and $\kappa= \kappa_1 e_1 + \kappa_2 e_2$ be the modified eigenvalue of $T$ such that $\kappa_1 \in \Upsilon_1 $ and $\kappa_2 \in \Upsilon_2 $, then
	\begin{center}
		$^T(ME)_{\kappa}={}^{T_1}(E)_{\kappa_1} \times_e {}^{T_2}(E)_{\kappa_2} $
	\end{center}
\end{theorem}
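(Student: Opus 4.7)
The plan is to imitate the iff-chain used in Theorems \ref{314} and \ref{316}, but now neither idempotent component is forced to vanish because both $\kappa_1$ and $\kappa_2$ are genuine eigenvalues. First I would fix an arbitrary $(\eta_1,\eta_2,\ldots,\eta_n) \in \C_2^n$ and expand the modified-eigenvector condition $T(\eta_1,\ldots,\eta_n) = \kappa(\eta_1,\ldots,\eta_n)$ using the definition $T = e_1 T_1 + e_2 T_2$ of \ref{defid} together with the idempotent representation $\kappa = \kappa_1 e_1 + \kappa_2 e_2$. This produces the identity
$$e_1 T_1(\eta_1^-,\ldots,\eta_n^-) + e_2 T_2(\eta_1^+,\ldots,\eta_n^+) = e_1\kappa_1(\eta_1^-,\ldots,\eta_n^-) + e_2\kappa_2(\eta_1^+,\ldots,\eta_n^+).$$

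Next I would appeal to the uniqueness of the idempotent decomposition (stemming from $e_1 e_2 = 0$, $e_1+e_2=1$) to split this single bicomplex equation into the pair of complex equations $T_1(\eta_1^-,\ldots,\eta_n^-) = \kappa_1(\eta_1^-,\ldots,\eta_n^-)$ and $T_2(\eta_1^+,\ldots,\eta_n^+) = \kappa_2(\eta_1^+,\ldots,\eta_n^+)$. Unlike in Theorems \ref{314} and \ref{316}, where one of the $\kappa_i$ failed to be an eigenvalue and therefore forced the corresponding component to be $0$, here both $\kappa_1 \in \Upsilon_1$ and $\kappa_2 \in \Upsilon_2$. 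Consequently each equation merely asserts membership in the ordinary complex eigenspace, and since eigenspaces are linear subspaces of $\C_1^n$ (containing the zero vector as well as the genuine eigenvectors), the pair of conditions is precisely $(\eta_1^-,\ldots,\eta_n^-) \in {}^{T_1}(E)_{\kappa_1}$ and $(\eta_1^+,\ldots,\eta_n^+) \in {}^{T_2}(E)_{\kappa_2}$.

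Finally I would invoke the definition of the idempotent product of subsets of $\C_1^n$ from \eqref{idem.def} to rewrite the conjunction as $(\eta_1,\ldots,\eta_n) \in {}^{T_1}(E)_{\kappa_1} \times_e {}^{T_2}(E)_{\kappa_2}$. Since every link in the chain is an if-and-only-if, both inclusions follow at once, giving the claimed equality ${}^T(ME)_{\kappa} = {}^{T_1}(E)_{\kappa_1} \times_e {}^{T_2}(E)_{\kappa_2}$.

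I do not foresee a real obstacle: the argument is essentially a direct transcription of the proofs of Theorems \ref{314} and \ref{316}, with the only conceptual change being that now neither idempotent component is constrained to vanish. The mild subtlety to watch is the explicit observation that ${}^{T_i}(E)_{\kappa_i}$ is a subspace and therefore contains $0$, which is what allows the eigenspace on the right to absorb the possibility that one of the two components is $0$ while the other is a proper eigenvector.
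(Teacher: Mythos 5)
Your proposal is correct and follows essentially the same route as the paper: the same if-and-only-if chain that expands $T(\eta)=\kappa\eta$ via the idempotent decomposition, splits it into the two component equations $T_i(\eta^{\mp})=\kappa_i(\eta^{\mp})$, and recombines using the definition of $\times_e$. Your added observation that each ${}^{T_i}(E)_{\kappa_i}$ contains $0$ (so a vanishing component is harmlessly absorbed) is a sensible clarification that the paper leaves implicit.
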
 
\begin{proof} 
	Let us Suppose that $(\eta_{1},\eta_{2},\ldots,\eta_{n}) \in {}^T(ME)_{\kappa}$.
	\begin{eqnarray*}
		\Leftrightarrow T (\eta_{1},\eta_{2},\ldots,\eta_{n}) &=& \kappa 
		(\eta_{1},\eta_{2},\ldots,\eta_{n})\\
		\Leftrightarrow (e_1T_1 + e_2 T_2)(\eta_{1},\eta_{2},\ldots,\eta_{n}) &=& (\kappa_1 e_1 + \kappa_2 e_2) 
		(\eta_{1},\eta_{2},\ldots,\eta_{n})\\
		\Leftrightarrow e_{1} {T}_{1} (\eta_{1}^-, \eta_{2}^-,\ldots,\eta_{n}^-)
		+ e_{2} {T}_{2} (\eta_{1}^+, \eta_{2}^+,\ldots,\eta_{n}^+) &=&  e_1 \kappa_1 (\eta_{1}^-, \eta_{2}^-,\ldots,\eta_{n}^-) + e_2 \kappa_2 (\eta_{1}^+, \eta_{2}^+,\ldots,\eta_{n}^+)\\
		\Leftrightarrow {T}_{1} (\eta_{1}^-, \eta_{2}^-,\ldots,\eta_{n}^-)
		= \kappa_1 (\eta_{1}^-, \eta_{2}^-,\ldots,\eta_{n}^-) &\mbox{and}& {T}_{2} (\eta_{1}^+, \eta_{2}^+,\ldots,\eta_{n}^+)=\kappa_2 (\eta_{1}^+, \eta_{2}^+,\ldots,\eta_{n}^+)\\
		\Leftrightarrow (\eta_{1}^-, \eta_{2}^-,\ldots,\eta_{n}^-) \in {}^{T_1}(E)_{\kappa_1} &\mbox{and}& (\eta_{1}^+, \eta_{2}^+,\ldots,\eta_{n}^+) \in {}^{T_2}(E)_{\kappa_2}   \\
		\Leftrightarrow (\eta_{1},\eta_{2},\ldots,\eta_{n}) &\in& {}^{T_1}(E)_{\kappa_1} \times_e {}^{T_2}(E)_{\kappa_2}
	\end{eqnarray*}
	It shows that $^T(ME)_{\kappa}={}^{T_1}(E)_{\kappa_1} \times_e {}^{T_2}(E)_{\kappa_2} $.\\
	Thus the theorem is proved.
\end{proof}

\section{Open problems}
\noindent{In the view of section 3, we propose an important open problem.}\\
{\textbf{Problem 4.1}} If $T= e_1 T_1 + e_2 T_2 \in L_1^{n} \times_e  L_1^{n} $ is  a linear operator on $\C_2^n(\C_1)$, ${}^T(ME)_{\kappa}$ and $ {}^T(ME)_{\kappa'}$ are the modified eigenspaces of $T$ associated with modified eigenvalues $\kappa= \kappa_1 e_1 + \kappa_2 e_2$ and $\kappa'= \kappa'_1 e_1 + \kappa'_2 e_2$, respectively. 
Is the sum of these two modified eigenspaces, ${}^T(ME)_{\kappa}$ and $ {}^T(ME)_{\kappa'}$, equal to the direct sum of ${}^{T}(ME)_{\kappa}$ and ${}^{T}(ME)_{\kappa'}$; that is 
\begin{eqnarray*}
    {}^T(ME)_{\kappa} + {}^T(ME)_{\kappa'} = {}^{T}(ME)_{\kappa} \oplus \ {}^{T}(ME)_{\kappa}
\end{eqnarray*}

\bibliographystyle{plain}
\bibliography{references}
\end{document}